\documentclass[12pt]{amsart}
\usepackage{fullpage}
\usepackage[T1]{fontenc}
\usepackage{amsmath,amssymb,amsthm,mathtools}
\usepackage{booktabs,array}
\usepackage[colorlinks=true,linkcolor=red,urlcolor=blue,citecolor=blue]{hyperref}
\hypersetup{pdftitle={A Proof of Mongardi's Conjecture on Finite Symplectic Group Actions on Hyperk\"ahler Manifolds of K3[n] Type},
            pdfauthor={Jie Fu, Shihao Wang, Zhiwei Zheng}}

\makeatletter
\def\thm@space@setup{%
  \thm@preskip=2ex \thm@postskip=2ex
}
\makeatother
\newtheorem{theorem}{Theorem~}[section]
\newtheorem{proposition}[theorem]{Proposition~}
\newtheorem{lemma}[theorem]{Lemma~}
\newtheorem{corollary}[theorem]{Corollary~}
\newtheorem{conj}[theorem]{Conjecture~}
\theoremstyle{definition}
\newtheorem{definition}[theorem]{Definition~}

\numberwithin{equation}{section}
\newcommand{\Z}{\mathbb Z}
\newcommand{\Q}{\mathbb Q}
\newcommand{\R}{\mathbb R}
\newcommand{\F}{\mathbb F}
\newcommand{\rk}{\operatorname{rk}}
\newcommand{\cont}{\operatorname{cont}}
\newcommand{\divi}{\operatorname{div}}
\newcommand{\Vor}{\operatorname{Vor}}
\newcommand{\pr}{\operatorname{pr}}

\newcommand{\Co}{\mathrm{Co}}
\newcommand{\Mukai}{\mathrm{II}_{4,20}}
\newcommand{\Leech}{\mathbb{L}}

\title[A proof of Mongardi's conjecture]{A Proof of Mongardi's Conjecture on Finite Symplectic Group Actions on Hyperk\"ahler Manifolds of $K3^{[n]}$ Type}
\author{Jie Fu, Shihao Wang, Zhiwei Zheng}
\date{}
\newcommand{\Addresses}{{%
    \bigskip
    \footnotesize

    J.~Fu, \textsc{Qiuzhen college, Tsinghua University, Beijing, China}\par\nopagebreak
    \textit{Email address}: \texttt{fu-j21@mails.tsinghua.edu.cn}

    \medskip

    S.~Wang, \textsc{Qiuzhen college, Tsinghua University, Beijing, China}\par\nopagebreak
    \textit{Email address}: \texttt{wangshihao25@mails.tsinghua.edu.cn}

    \medskip

    Z.~Zheng, \textsc{Yau Mathematical Sciences Center, Tsinghua University, Beijing, China}\par\nopagebreak
    \textit{Email address}: \texttt{zhengzhiwei@mail.tsinghua.edu.cn}
}}

\begin{document}

\begin{abstract}
Let a finite group act faithfully by symplectic automorphisms on a manifold of $K3^{[n]}$ type for $n\geq2$. We prove Mongardi's conjecture that the strict inequality $\rk(S)+\ell(A_S)<24$ holds for its associated Leech coinvariant lattice $S$. The converse was already proved independently by Huybrechts and Mongardi: any Leech coinvariant lattice $S$ satisfying the strict inequality and of rank at most $20$ is realized by such an action for some $n\geq2$. The key idea is to consider the minimum norms of vectors representing discriminant classes of $S$. The absence of numerical walls gives lower bounds for certain classes, while the structure of Leech lattice implies upper bounds for all discriminant classes. Comparing these bounds reduces the proof to finitely many values of $n$. We exclude the remaining cases using further lattice arguments, together with H\"ohn--Mason's classification of fixed-point sublattices of Leech lattice, thereby proving Mongardi's conjecture.
\end{abstract}

\maketitle
\setcounter{tocdepth}{2}
\tableofcontents

\section{Introduction}

Let $X$ be a hyperk\"ahler manifold of $K3^{[n]}$ type. An automorphism of $X$ is symplectic if it acts trivially on $H^{2,0}(X)$. When $n=1$, $X$ is a K3 surface, and the classification of finite symplectic automorphism groups is due to Mukai \cite{MukaiK3}. 

For $n\geq2$, let $G$ be a finite group acting faithfully on $X$ by symplectic automorphisms, and define its coinvariant lattice by $S_G(X)=\bigl(H^2(X,\Z)^G\bigr)^\perp_{H^2(X,\Z)}$. The lattice $S_G(X)$ is negative definite and contains no vectors of norm $-2$, and $G$ acts trivially on its discriminant group $A_{S_G(X)}$ \cite[Lemma~24]{MongardiClassification}; see also \cite[Theorem~1.1]{FuWangZhengStability} for the latter property. Since $H^2(X,\Z)$ has signature $(3,20)$, the negative-definite lattice $S_G(X)$ has rank at most $20$. Moreover, its primitive embedding into the even unimodular Mukai lattice $\Mukai$ of signature $(4,20)$ gives $\rk(S_G(X))+\ell(A_{S_G(X)})\leq24$, where $\ell(A_L)$ denotes the minimum number of generators of the discriminant group $A_L$.

Let $\Leech$ be the positive-definite Leech lattice, with isometry group $\Co_0=O(\Leech)$. By Lemma~\ref{lem:leech-embedding}, $S_G(X)(-1)$ admits a primitive embedding into $\Leech$. Extending the $G$-action by the identity on the orthogonal complement gives $G\subset\Co_0$ and an associated Leech coinvariant lattice $S=S_G(\Leech)\simeq S_G(X)(-1)$ as $G$-lattices.

Huybrechts \cite[Theorem~0.2]{HuybrechtsConway} and Mongardi \cite[Theorem~29 and Lemma~30]{MongardiClassification} independently proved that every Leech coinvariant lattice $S=S_G(\Leech)$ with $\rk S\leq20$ and $\rk S+\ell(A_S)<24$ is geometrically realized: there exist an integer $n\geq2$ and a manifold $X$ of $K3^{[n]}$ type with a faithful symplectic $G$-action such that $S_G(X)\simeq S(-1)$ as $G$-lattices. Huybrechts's construction uses derived autoequivalences of K3 surfaces preserving a Bridgeland stability condition and their induced actions on moduli spaces of stable objects. Mongardi's approach uses lattice embeddings and the global Torelli theorem.

In terms of the associated coinvariant lattice, the remaining direction of Mongardi's conjecture \cite[Conjecture~32]{MongardiClassification} can be stated as follows.

\begin{conj}[Mongardi]\label{conj:mongardi}
Let a finite group $G$ act faithfully by symplectic automorphisms on a manifold $X$ of $K3^{[n]}$ type, with $n\geq2$. Then its associated Leech coinvariant lattice $S$ satisfies $\rk S+\ell(A_S)<24$.
\end{conj}

H\"ohn and Mason classified finite symplectic automorphism groups on manifolds of $K3^{[2]}$ type and proved the strict inequality for their Leech coinvariant lattices \cite[Theorems~8.6--8.7]{HoehnMasonK3two}. Billi, Muller, and Wawak classified finite stable symplectic birational actions on manifolds of $K3^{[3]}$ type \cite[Theorem~2.4]{BilliMullerWawak}. For regular symplectic actions, their classification together with the absence of numerical walls gives the required inequality for $n=3$; see Section~\ref{sec:low}.

Our main result proves Conjecture~\ref{conj:mongardi} for all $n\geq2$.

\begin{theorem}\label{thm:main}
Let a finite group $G$ act faithfully and symplectically on a manifold $X$ of $K3^{[n]}$ type, $n\geq2$. For its associated Leech coinvariant lattice $S$, one has $\rk S\leq20$ and $\rk S+\ell(A_S)<24$.
\end{theorem}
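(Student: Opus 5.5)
The bound $\rk S\leq 20$ is immediate: $S\simeq S_G(X)(-1)$ and $S_G(X)$ is negative definite inside $H^2(X,\Z)$ of signature $(3,20)$. For the strict inequality, the embedding into $\Mukai$ already gives $\rk S+\ell(A_S)\leq 24$. So I will argue by contradiction, assuming $\rk S+\ell(A_S)=24$, and derive a contradiction for every $n\geq 2$.

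First I will record what equality means. If $\rk S+\ell(A_S)=24$, then in the primitive embedding $S_G(X)\subset H^2(X,\Z)\subset\Mukai$ the orthogonal complement $T$ of $S_G(X)$ in $\Mukai$ has rank $24-\rk S=\ell(A_S)$. Hence $A_T\simeq A_S$ is generated by $\rk T$ elements, and $T$ is as "non-unimodular" as possible. Write $H^2(X,\Z)=v^\perp$ with $v^2=2n-2$. Then $v\in T$, and $T$ contains $H^2(X,\Z)^G\oplus\Z v$ as a finite-index sublattice. Since $\ell(A_T)=\rk T$, every discriminant class of $S_G(X)$ is glued to a class of $T$. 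In particular, suitable combinations of $v/(2n-2)$-type classes force elements of $S^\vee$ whose images in $A_S$ come from $v$.

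Second, I will produce lower bounds using the absence of numerical walls. Since $G$ acts regularly, $S_G(X)$ contains no wall divisors (it is orthogonal to a $G$-invariant Kähler class). By the Bayer–Macrì / Mongardi description of wall divisors for $K3^{[n]}$ type, this says that for every $s\in S_G(X)$ and every Mukai vector $w=s+av+\dots$ with $w^2\geq -2$ and $0\le (w,v)\le v^2/2$, the corresponding numerical condition fails. Translating via the gluing above, I expect this to give: for each discriminant class $\alpha\in A_S$ that pairs with the class of $v$, the minimal norm $\min\{x^2: x\in\alpha+S\}$ in the positive lattice $S$ is bounded below by a quantity growing linearly in $n$, roughly of order $n/\text{(order of }\alpha)^2$ or similar.

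Third, I will get upper bounds from the Leech lattice. Since $S$ is primitive in $\Leech$ with complement $S^\perp=\Leech^G$, each class of $A_S\simeq A_{\Leech^G}$ is represented by a projection of a Leech vector. The covering radius of $\Leech$ is $\sqrt2$, so every class of $S^\vee/S$ has a representative of norm at most $2$ up to the complement contribution. More precisely, I will bound the minimum norm of any class by a constant (about $4$) depending only on the Leech structure. Comparing this with the linear-in-$n$ lower bound excludes all $n$ beyond an explicit bound $N_0$.

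Finally, I will treat $2\le n\le N_0$. For $n=2$ the result follows from Höhn–Mason, and for $n=3$ from Billi–Muller–Wawak together with the wall criterion (Section~\ref{sec:low}). For the remaining finite list, I will use Höhn–Mason's classification of fixed-point sublattices $\Leech^G$. The equality $\rk S+\ell(A_S)=24$ forces $\ell(A_{\Leech^G})=\rk\Leech^G+$ small, which leaves only a short list of candidates. For each candidate I will check whether some overlattice $T\ni v$ with $v^2=2n-2$ exists that avoids walls. I expect this finite check, and especially making the lower wall bound sharp enough that $N_0$ is small, to be the main obstacle. For the borderline cases I would first try computing discriminant forms explicitly and testing the existence of a primitive $v$ compatible with the gluing.
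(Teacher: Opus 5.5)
\textbf{There is a genuine gap in your upper-bound step.} Your overall shape matches the paper: lower bounds on discriminant minima from the absence of walls, a uniform upper bound from $\Leech$, then a finite list of $n$. But the upper bound, which carries the argument, does not work as you describe it.

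The covering radius $\sqrt2$ of $\Leech$ says every point of $\Leech_\R$ is close to $\Leech$. It says nothing about the minima $\mu_S(d)=\min\{\pr_S(w)^2: w\in u+(S\oplus\Leech^G)\}$ of cosets of the finite-index sublattice $S\oplus\Leech^G$. If $\lambda\in\Leech$ is close to $x\in d$, then $x-\pr_S(\lambda)$ represents $d-[\pr_S\lambda]$, which in general is not $d$. The ``complement contribution'' you set aside is the whole difficulty.

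No such bound holds for arbitrary primitive sublattices of $\Leech$. For $\Z w$ with $w$ primitive of norm $N$, the class of $\lfloor N/2\rfloor w/N$ has minimum about $N/4$. So the hypotheses $S^G=0$ and $G$ trivial on $A_S$ must enter, and your sketch never uses them.

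The missing ingredient is the paper's averaging lemma. Since $y-gy\in S$ for $y\in S^\vee$ and $\sum_g gy=0$, one gets $\Vor(S)\subseteq2\Vor(S^\vee)$. Hence a shortest $x\in d$ is also shortest in $x+2S^\vee\supseteq\pr_S(u+2\Leech)$. Conway's description of shortest representatives in $\Leech/2\Leech$ (a norm-$4$ or norm-$6$ pair, or an orthogonal frame of norm-$8$ vectors) then gives $\mu_S(d)\le\max\{6,\rk S/3\}$, and $\mu_S(d)\le6$ when $\cont(\Leech^G)$ is odd.

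Your constant ``about $4$'' is also not what this yields. Against the wall bound $4+k^2(2n-2)/a^2>4$, any uniform bound $\le4$ would finish every $n$ at once. The available constant is near $6$, which is why $n\in\{2,3,4,5,7\}$ survive and need separate treatment.

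\textbf{The lower-bound step also needs precise inputs your first step does not supply.} Equality $\alpha(S)=0$ forces $\ell(A_{\Leech^G})=\rk\Leech^G$. Hence $\cont(R)=\cont(\Leech^G)>1$ for $R=M^\perp_{\Mukai}$, so $a=\divi_R(v)>1$. Then $v/a+R$ glues to a nonzero class $c\in A_S$ of order $a$ with $q_S(c)\equiv(2n-2)/a^2$. Your ``$v/(2n-2)$-type classes'' are not the right object: $v/(2n-2)$ lies in $R^\vee$ only when $a=2n-2$.

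The resulting bound is $\mu_S(kc)\ge4+k^2(2n-2)/a^2$. Since $a$ can be as large as $2n-2$, linear growth in $n$ comes only from taking $k\approx a/2$:
\begin{itemize}
\item for even $a$, the order-two class $\frac a2c$ gives $4+\frac{n-1}2$;
\item for odd $a$, take $k=\frac{a-1}2$ and combine with the fact that odd content must be $3$.
\end{itemize}

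\textbf{Your finite stage is only a placeholder.} The statement that equality forces $\ell(A_{\Leech^G})=\rk\Leech^G$ ``$+$ small'' is inaccurate; it forces exact equality. More seriously, the surviving cases in the paper are not a routine table check. They require:
\begin{itemize}
\item an argument valid for all $G$ showing that order-two isotropic classes have minimum $2$ or $4$ (for $n=5$);
\item a reduction via H\"ohn--Mason to the $D_4(3)$ fixed lattice with an explicit projection (for $n=4$);
\item a Hermite-bound argument on the rank-four Mukai complement (for $n=7$).
\end{itemize}
Your proposal leaves all of this unspecified.
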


The key idea is to consider the minimum norms of vectors representing discriminant classes of $S$. Since $\rk S+\ell(A_S)\leq24$, it suffices to exclude equality. Assuming $\rk S+\ell(A_S)=24$, unimodular gluing in the Mukai lattice determines certain nonzero discriminant classes of $S$. The absence of numerical walls gives lower bounds for these classes: a representative of sufficiently small norm would produce a numerical wall in $S_G(X)$. The structure of the Leech lattice implies upper bounds for all discriminant classes, obtained by combining a $G$-averaging argument with projections of short representatives in $\Leech/2\Leech$ onto the real span of $S$.

Comparing these bounds reduces the proof to finitely many values of $n$, namely $n\in\{2,3,4,5,7\}$. The cases $n=2,3$ follow from the classifications above. We exclude the remaining cases using further lattice and parity arguments, together with H\"ohn--Mason's classification of fixed-point sublattices of the Leech lattice \cite{HoehnMason}. For $n=4,7$, this classification restricts the possible fixed lattices, and discriminant-minimum estimates and a rank-four Mukai-complement argument rule out the corresponding geometric realizations. For $n=5$, norm-$8$ frames and coordinate parity yield an upper bound of $4$ for the relevant class of order two, contradicting the lower bound of $6$.

The paper is organized as follows. Section~\ref{sec:geometry} recalls the lattice and wall results used below, the short representatives in $\Leech/2\Leech$, and H\"ohn--Mason's classification of Leech fixed-point lattices. Section~\ref{sec:minima} develops the lower and upper bounds for minimum norms in discriminant classes. Section~\ref{sec:remaining} excludes the remaining cases and proves Theorem~\ref{thm:main} and Corollary~\ref{cor:criterion}.

\medskip\noindent\textbf{Acknowledgements.} As this paper was nearing completion, we learned through email correspondence with Stevell Muller of his independent, as yet unpublished joint project with Simon Brandhorst, which contains a proof of Mongardi's conjecture. We thank Muller for this information; the two projects were carried out independently of each other.

\medskip\noindent\textbf{AI disclosure.} Large language models were used to explore proof strategies, assist in the development and refinement of mathematical arguments, and draft and revise the text. The authors have checked all mathematical arguments and the full text, and take responsibility for all results and arguments presented in this paper.

\section{Preliminaries}\label{sec:geometry}

We recall some terminology and results needed below. Nothing in this section is new.

\subsection{Lattices and discriminant forms}

A lattice is a free abelian group of finite rank equipped with a nondegenerate symmetric bilinear form $(\cdot,\cdot)$ with values in $\Q$. Say a lattice is integral if the bilinear form takes its values in $\Z$. Unless stated otherwise, lattices in this paper are integral and even. The norm of a vector $x$ is $x^2=(x,x)$. For $a\in\Q^\times$, write $L(a)$ for $L$ with its bilinear form multiplied by $a$.

For an integral lattice $L$, its dual lattice is $L^\vee=\{x\in L\otimes\Q:(x,L)\subset\Z\}$. The finite abelian group $A_L=L^\vee/L$ is its discriminant group. Write $\det L$ for the determinant of a Gram matrix of $L$, so $|A_L|=|\det L|$. The length $\ell(A_L)$ is the minimal number of generators of $A_L$.

For a nonzero integral lattice $L$, its content is $\cont(L)=\gcd\{(x,y):x,y\in L\}$. The Smith factors of $L$ relate the discriminant length to its content:

\begin{lemma}\label{lem:content}
Let $L$ be a nonzero integral lattice. Then $\rk L=\ell(A_L)$ if and only if $\cont(L)>1$. Furthermore, $\rk L$ and $A_L$ determine $\cont(L)$.
\end{lemma}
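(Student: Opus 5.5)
The plan is to reduce everything to the Smith normal form of a Gram matrix. Let $r=\rk L$ and let $M$ be a Gram matrix of $L$ with respect to a basis $e_1,\dots,e_r$. Choose unimodular $P,Q\in GL_r(\Z)$ with $PMQ=\mathrm{diag}(d_1,\dots,d_r)$, where $d_1\mid d_2\mid\cdots\mid d_r$ are positive integers; nondegeneracy of $L$ makes all $d_i\neq0$. Since $L^\vee$ is spanned by the dual basis, the map $L\to L^\vee$ has matrix $M$, so
\[
A_L\simeq\Z^r/M\Z^r\simeq\bigoplus_{i=1}^r\Z/d_i\Z .
\]
The invariant-factor decomposition of a finite abelian group is unique up to dropping trivial factors. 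Hence $\ell(A_L)$ equals the number of indices $i$ with $d_i>1$, and the multiset $\{d_i: d_i>1\}$ is determined by $A_L$.

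The second ingredient is the standard fact that $d_1$ is the gcd of the $1\times1$ minors of $M$, i.e. of all entries of $M$. This holds because the gcd of the $k\times k$ minors is invariant under multiplication by unimodular matrices (Cauchy--Binet). By bilinearity, the gcd of the entries $(e_i,e_j)$ equals $\gcd\{(x,y):x,y\in L\}$. Therefore $d_1=\cont(L)$.

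Both assertions now follow quickly.

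For the first assertion, suppose $\ell(A_L)=r$. Then all $d_i>1$, in particular $\cont(L)=d_1>1$. Conversely, if $d_1>1$, then $d_i>1$ for all $i$ because $d_1\mid d_i$. So all $r$ factors are nontrivial and $\ell(A_L)=r$.

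For the second assertion, suppose $\ell(A_L)<r$. Then $d_1=1$ by the above, so $\cont(L)=1$. Suppose instead $\ell(A_L)=r$. Then all $d_i>1$ and all appear among the invariant factors of $A_L$. In this case $\cont(L)=d_1$ is the smallest invariant factor of $A_L$, equivalently the exponent of the smallest cyclic factor in the invariant-factor decomposition. Either way, $\cont(L)$ is determined by $r$ and the isomorphism class of $A_L$.

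The only point needing care is the identification of $d_1$ with the content, i.e. the invariance of the gcd of entries under $M\mapsto PMQ$. This is immediate: entries of $PMQ$ are integer combinations of entries of $M$, and the reverse holds because $P$ and $Q$ are invertible over $\Z$. No other obstacle is expected.
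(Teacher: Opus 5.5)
Your proof is correct and follows the paper's argument: both identify $\cont(L)$ with the first Smith factor of a Gram matrix, identify $\ell(A_L)$ with the number of nonunit Smith factors, and read off both assertions from $d_1\mid d_i$. You simply fill in details the paper's three-line proof leaves implicit, namely $A_L\simeq\Z^r/M\Z^r$ and the invariance of the gcd of the entries under unimodular equivalence.
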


\begin{proof}
Let $r=\rk L$, and let $s_1\mid\cdots\mid s_r$ be the positive Smith factors of $L$, including unit factors. Then $\cont(L)=s_1$, and $\ell(A_L)$ equals the number of nonunit factors. Moreover, $A_L$ determines the nonunit factors. Thus $\ell(A_L)=\rk L$ if and only if $s_1>1$, and $\rk L$ and $A_L$ determine $s_1$.
\end{proof}

For an even lattice $L$, its discriminant form is $q_L:A_L\to\Q/2\Z$, given by $q_L(x+L)=x^2+2\Z$ for $x\in L^\vee$.

A sublattice $L\subset M$ is primitive if $M/L$ is torsion-free. A nonzero vector $v\in M$ is primitive if $\Z v$ is a primitive sublattice of $M$.

For a lattice $L$ with a $G$-action, write $L^G$ for its fixed lattice and $S_G(L)=(L^G)^\perp_L$ for its coinvariant lattice.

For a primitive embedding $L\hookrightarrow M$ into an even unimodular lattice, let $R=L^\perp_M$. The embedding determines an isomorphism $\gamma:A_L\xrightarrow{\sim}A_R$ satisfying $q_R\circ\gamma=-q_L$ \cite[Proposition~1.6.1]{Nikulin}. If $G$ acts on $L$ and $R$, these actions extend to $M$ if and only if $\gamma$ intertwines their induced actions on the discriminant groups \cite[Corollary~1.5.2]{Nikulin}. In particular, a $G$-action on $L$ extends to $M$ by the identity on $R$ if and only if it acts trivially on $A_L$.

\subsection{The Leech lattice}\label{sec:leech}

A root of an even positive-definite (negative-definite) lattice is a vector whose norm equals $2$ ($-2$). The Leech lattice $\Leech$ is, up to isometry, the unique even positive-definite unimodular rootless lattice of rank $24$ \cite{ConwaySloane}.

Following \cite[Definition~2.5.1]{MongardiThesis}, we use the positive-definite convention: a Leech pair $(S,G)$ consists of an even positive-definite rootless lattice $S$ and a finite group $G\subset O(S)$ acting faithfully, with $S^G=0$ and trivial action on $A_S$.

The Leech embedding lemma was originally due to Gaberdiel, Hohenegger, and Volpato \cite[Appendix~B.2]{GaberdielHoheneggerVolpato}. The primitiveness was established for $\rk S\leq21$ by Marquand and Muller \cite[Corollary~4.19]{MarquandMuller}, and in full generality by Zheng \cite[Lemma~1.1]{ZhengLeech}.

\begin{lemma}[Leech embedding lemma]\label{lem:leech-embedding}
Let $(S,G)$ be a Leech pair with $\rk S+\ell(A_S)\leq24$. Then $S$ admits a primitive embedding into $\Leech$, under which the $G$-action extends to $\Leech$ by the identity on the orthogonal complement of $S$.
\end{lemma}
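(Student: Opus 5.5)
The plan is to realize $S$ inside $\Leech$ as an orthogonal summand of a lattice obtained by gluing. First I will find an even positive-definite lattice $R$ with $\rk R=24-\rk S$ and $(A_R,q_R)\simeq(A_S,-q_S)$. Its existence follows from Nikulin's existence theorem for even lattices with prescribed signature and discriminant form \cite{Nikulin}. When $\rk R>\ell(A_S)$ this is the generic case, since $24\equiv0\pmod 8$ makes the signature condition automatic. When $\rk R=\ell(A_S)$ I will need the extra local conditions at each prime $p$ with $\ell(A_{S,p})=\rk R$. These conditions are satisfied because they hold for $S$ itself, up to sign. Concretely, $\rk S+\ell(A_S)\le 24$ means $S\oplus\mathbb{E}_8^{\,k}$, or a suitable shifted genus, can be used. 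Equivalently, one can embed $S$ primitively into some even unimodular lattice of rank $24$ by choosing $R$ in the genus determined by $(24-\rk S,\,-q_S)$. Gluing $S\oplus R$ along an anti-isometry $\gamma:A_S\to A_R$ then gives an even unimodular positive-definite lattice $N$ of rank $24$, i.e.\ a Niemeier lattice or $\Leech$, containing $S$ primitively with $S^\perp_N=R$. Because $G$ acts trivially on $A_S$, the criterion recalled after Nikulin's Corollary~1.5.2 extends the $G$-action to $N$ by the identity on $R$.

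The real work is to make $N=\Leech$, i.e.\ to choose $R$ in its genus so that the glued lattice has no roots. A root $r\in N$ decomposes as $r=s+t$ with $s\in S^\vee$, $t\in R^\vee$, $s^2+t^2=2$ and $\gamma(\bar s)=\bar t$. The case $s=0$ means $R$ has a root. The case $t=0$ is impossible because $S$ is rootless. The mixed cases give short vectors in dual cosets. The plan here is to follow Gaberdiel--Hohenegger--Volpato. Start from any gluing $N$. If $N$ is a Niemeier lattice with roots, use $G$-invariance: $S=S_G(N)$ is the coinvariant lattice of $G\subset O(N)$. Since $S$ is rootless, $G$ fixes a Weyl chamber up to the action, which can be arranged by replacing $G$ with a conjugate by a Weyl group element. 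So $G$ preserves a Weyl vector $\rho$ and lies in the stabilizer of the Niemeier "holy construction". Then use the correspondence between Niemeier lattices with Weyl vector and the Leech lattice via the Borcherds/Conway $\mathrm{II}_{25,1}$ picture. Namely, $N\oplus U \simeq \Leech\oplus U$, and the isotropic vector defining $\Leech$ can be chosen $G$-fixed. This transports $S$ isometrically, $G$-equivariantly and primitively into $\Leech$, with $G$ acting trivially on $S^\perp_{\Leech}$.

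For primitivity and the trivial action on the complement, I will work in $\mathrm{II}_{25,1}=N\oplus U$. Here $G$ acts trivially on $R\oplus U$, and $S$ is primitive with $S^\perp=R\oplus U$. Choose a primitive isotropic $G$-invariant vector $w\in R\oplus U$ whose $w^\perp/w$ is rootless, i.e.\ a Leech-type cusp. Then $\Leech\simeq w^\perp/\Z w$, and $S$ maps isometrically into it because $S\perp w$. Primitivity of $S$ in $w^\perp/w$ follows since $S$ is primitive in $\mathrm{II}_{25,1}$ and $S\cap(\Z w\otimes\Q)=0$. The $G$-fixed part of $w^\perp/w$ is $(R\oplus U)\cap w^\perp$ modulo $w$. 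So $G$ acts trivially on the orthogonal complement of $S$ and faithfully on $S$.

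I expect the main obstacle to be producing the $G$-invariant Leech-type isotropic vector $w$. Equivalently, the vector $\rho$ must be chosen inside the $G$-fixed lattice $R\oplus U$ so that $w^\perp/w$ is rootless. The first thing I will try is Conway's Weyl vector $w=(0,0,1)$-type argument restricted to the fixed Lorentzian lattice $R\oplus U$. Roots of $\mathrm{II}_{25,1}$ orthogonal to $w$ project either into $S^\vee$ or into the fixed part. Rootlessness of $S$ kills the first, and a suitable choice of a Weyl chamber of $R\oplus U$, which is hyperbolic of rank $\ge2$ since $\rk S\le 24$, kills the second. I will also have to treat the boundary case $\rk S=24$ separately, where $S$ is unimodular rootless and hence $S=\Leech$.
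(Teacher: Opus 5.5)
The paper does not reprove this lemma. It cites Gaberdiel--Hohenegger--Volpato for the equivariant embedding, and separately cites Marquand--Muller and Zheng for primitivity. Your proposal breaks at exactly that second point.

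Your $\mathrm{II}_{25,1}$ argument is the GHV route, and the step you expect to be the main obstacle is standard. A root orthogonal to the fixed lattice $T=S^\perp$ would lie in $S$, so a generic timelike $h\in T_\R$ avoids all root hyperplanes. The Weyl chamber containing $h$ is then $G$-stable, so its Conway Weyl vector $w$ is $G$-fixed and lies in $T$. (No root is orthogonal to a Weyl vector at all, so your dichotomy for roots in $w^\perp$ plays no role.)

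The gap is your claim that primitivity of $S$ in $\mathrm{II}_{25,1}$, together with $S\cap\Q w=0$, makes the image $\bar S$ primitive in $w^\perp/\Z w$. This is false. The image $\bar S$ is primitive if and only if $S\oplus\Z w$ is primitive in $\mathrm{II}_{25,1}$, i.e.\ if and only if $\divi_T(w)=1$. In general $\bar S$ has index $\divi_T(w)$ in its saturation, which is $S_G(\Leech)$. For example, take $\Leech\oplus U$ with its standard Weyl vector $w$, and $e\in\Leech$ of norm $4$. The vector $x=2e+w$ is primitive and $\Z x\cap\Q w=0$, but its image in $w^\perp/\Z w\simeq\Leech$ is $2e$, which is not primitive; here $\divi_{x^\perp}(w)=2$.

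So your construction only embeds $S$ with finite index into $S_G(\Leech)$. To finish, you must produce a $G$-invariant Weyl vector with $\divi_T(w)=1$, or otherwise control the saturation. This is where the hypothesis $\rk S+\ell(A_S)\le24$ must enter essentially. In your proposal that hypothesis is used only to construct $R$, which the $\mathrm{II}_{25,1}$ argument does not need.

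There is also a secondary error in your first step, in the equality case $\rk S+\ell(A_S)=24$: Nikulin's extra local conditions for $R$ do not follow from the local structure of $S$. Take $S=A_2^{\oplus3}\oplus E_6\oplus E_8$, so $\rk S=20$ and $A_S\simeq(\Z/3)^4$. An even positive-definite $R$ of rank $4$ with $A_R\simeq(\Z/3)^4$ would have all Smith factors equal to $3$. Then $R(\frac13)$ would be a positive-definite unimodular lattice of rank $4$, hence $\Z^4$ with the standard form, and $R\simeq\Z^4(3)$ would be odd. For Leech pairs the existence of such $R$ is a consequence of the lemma, not an input. You can avoid this entirely by embedding $S$ primitively into $\mathrm{II}_{25,1}$ directly: there $26-\rk S>\ell(A_S)$ holds strictly, so no local conditions arise, and the Niemeier detour in your second paragraph becomes unnecessary.
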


For a finite subgroup $G\subset O(\Leech)$, we call $S_G(\Leech)$ a Leech coinvariant lattice.

Let $S=S_G(\Leech)$. Since $S$ is primitive in the unimodular lattice $\Leech$, one has $\rk(\Leech^G)=24-\rk S$ and $\ell(A_{\Leech^G})=\ell(A_S)$. Set $\alpha(S)=24-\rk S-\ell(A_S)=\rk(\Leech^G)-\ell(A_{\Leech^G})\geq0$. Thus $\alpha(S)>0$ is equivalent to $\rk(\Leech^G)>\ell(A_{\Leech^G})$.

H\"ohn and Mason classified the $290$ $\Co_0$-orbits of fixed-point lattices in $\Leech$ and tabulated their Gram matrices and known inclusions \cite[Theorem~1.1, Table~1, and Supplementary Tables~2--3]{HoehnMason}.

The following lemma due to Conway \cite{ConwayLeech} describes the shortest representatives of nonzero classes in $\Leech/2\Leech$. It will be used to bound minimum norms in discriminant classes.

\begin{lemma}\label{lem:modtwo}
Every nonzero class in $\Leech/2\Leech$ has exactly one of the following types of shortest representatives:
\begin{enumerate}
\item one pair of opposite vectors of norm $4$;
\item one pair of opposite vectors of norm $6$;
\item twenty-four pairs of opposite vectors of norm $8$, with vectors from distinct pairs orthogonal.
\end{enumerate}
\end{lemma}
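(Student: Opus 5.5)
Since $\Leech$ is unimodular, the pairing induces a perfect pairing on $\Leech/2\Leech\cong\F_2^{24}$ given by $x\cdot y=(x,y)\bmod 2$. The plan is to combine a count of classes with a count of short vectors, and then to analyse each class separately. I would use the standard counts for the Leech lattice: there are $196560$ vectors of norm $4$, $16773120$ of norm $6$, and $398034000$ of norm $8$.

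First I would show that distinct short vectors rarely fall in the same class. Suppose $x\ne\pm y$ lie in the same class, with $x^2,y^2\le 6$. Then $x-y$ and $x+y$ both lie in $2\Leech$ and are nonzero, so each has norm at least $16$. Adding,
\[
(x-y)^2+(x+y)^2=2x^2+2y^2\le 24,
\]
which contradicts the lower bound $32$ for the sum. Hence each class contains at most one pair $\pm x$ of vectors of norm at most $6$. The classes of norm-$4$ pairs and of norm-$6$ pairs are therefore pairwise distinct and disjoint, giving $98280+8386560=8484840$ classes.

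Next I would handle norm $8$. Let $x,y$ have norm $8$ and lie in the same class, with $x\ne\pm y$. Then
\[
(x\pm y)^2=16\pm 2(x,y)\ge 16,
\]
and this forces $(x,y)=0$. So vectors of norm $8$ in a common class are pairwise orthogonal up to sign, and there are at most $24$ pairs in any class. The same computation, applied with $x^2\le 6$ and $y^2=8$, gives a contradiction, so a class that contains a vector of norm at most $6$ contains no vector of norm $8$ differing from it by an element of $2\Leech$. In particular, such a class does not have norm $8$ as its minimum.

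For the counting step, the number of remaining nonzero classes is $2^{24}-1-8484840=8292375$. The number of norm-$8$ pairs is $199017000=24\cdot 8292375$. The main obstacle is to show that every remaining class actually contains a vector of norm $8$, that is, that each class has minimum norm at most $8$. This is the covering-radius fact $\sqrt2$ for $\Leech$, and since it is standard I would cite it via \cite{ConwaySloane}. Granting it, each remaining class contains at most $24$ pairs of norm-$8$ vectors, while the $8292375$ classes must together absorb all $24\cdot 8292375$ such pairs. So every remaining class contains exactly $24$ mutually orthogonal pairs, and types (1)–(3) are exhaustive and mutually exclusive.
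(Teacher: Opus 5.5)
Your argument is correct and is essentially Conway's classical counting proof, which the paper invokes by citation rather than reproving. One simplification: the covering-radius input is superfluous. Once all $199017000$ norm-$8$ pairs are confined to the $8292375$ classes with no vector of norm at most $6$, each class holding at most $24$ pairs, pigeonhole alone forces every such class to contain exactly $24$ pairs and hence to have minimum norm $8$. So the step you flagged as the main obstacle is not needed, and the much deeper covering-radius theorem need not be cited.
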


\begin{proof}
See \cite{ConwayLeech} and \cite[Chapter~12, Theorem~2]{ConwaySloane}.
\end{proof}

\subsection{Coinvariant lattices and wall divisors}\label{sec:coinvariant-wall}

For $n\geq2$, let
\[
 L_n=U^{\oplus3}\oplus E_8(-1)^{\oplus2}\oplus\langle2-2n\rangle,
 \qquad \Mukai=U^{\oplus4}\oplus E_8(-1)^{\oplus2}.
\]
Here $U$ is the even unimodular hyperbolic plane and $E_8$ is the positive-definite root lattice of type $E_8$.

Let $X$ be a manifold of $K3^{[n]}$ type. The lattice $H^2(X,\Z)$, equipped with the Beauville--Bogomolov--Fujiki form, is isometric to $L_n$. Choose a primitive embedding $H^2(X,\Z)\hookrightarrow\Mukai$ as in Markman's construction \cite[Corollary~9.5]{MarkmanSurvey}. Let $v$ generate its orthogonal complement. Then $v^2=2n-2$ and the image of $H^2(X,\Z)$ is $v^\perp$.

\begin{definition}\label{def:numerical-wall}
Let $v\in\Mukai$ be primitive with $v^2=2n-2$, where $n\geq2$. For a primitive vector $D\in v^\perp$ with $D^2<0$, let $\Pi=(\Q v+\Q D)\cap\Mukai$. The following equivalent conditions define the numerical wall condition for $D$ relative to $v$:
\begin{enumerate}
\item\label{item:wall-short} There is $\rho\in\Pi\setminus\Q v$ such that $\rho^2\geq-2$ and $0\leq(\rho,v)\leq\frac{v^2}{2}$.
\item\label{item:wall-decomposition} Either there is $s\in\Pi$ with $s^2=-2$ and $0\leq(s,v)\leq\frac{v^2}{2}$, or $v=\rho+\eta$ for $\rho,\eta\in\Pi$ with $\rho^2,\eta^2\geq0$ and $(\rho,v),(\eta,v)>0$.
\item\label{item:wall-numerical} There is $\rho\in\Pi$ with either $\rho^2=-2$ and $0\leq(\rho,v)\leq\frac{v^2}{2}$, or $0\leq\rho^2v^2<(\rho,v)^2\leq\bigl(\frac{v^2}{2}\bigr)^2$.
\end{enumerate}
We also call a rational line in $v^\perp$ a numerical wall if its primitive integral generator satisfies these conditions.
\end{definition}

Condition~\eqref{item:wall-short} is the wall-class criterion of Bayer--Macr\`i \cite[Theorem~12.1]{BayerMacri}. Condition~\eqref{item:wall-decomposition} gives an equivalent formulation of the criterion in \cite[Theorem~5.7]{BayerMacri}. Condition~\eqref{item:wall-numerical} corrects a typographical error in \cite[Definition~9]{MongardiClassification}: the middle inequality should be strict and the final one non-strict.

The following proposition is \cite[Theorem~10 and Lemma~24(3)]{MongardiClassification}. It is a consequence of the wall theory developed by Bayer--Macr\`i \cite{BayerMacri}, Bayer--Hassett--Tschinkel \cite{BayerHassettTschinkel} and Mongardi \cite{MongardiCones}. We sketch its proof.

\begin{proposition}\label{prop:geometry}
Let $X$ be a manifold of $K3^{[n]}$ type, $n\geq2$. If an algebraic class $D\in H^2(X,\Z)$ satisfies Definition~\ref{def:numerical-wall} for the Mukai embedding of $X$, then $D^\perp\cap\mathcal K_X=\varnothing$, where $\mathcal K_X$ is the K\"ahler cone. In particular, if $G$ is a finite group of symplectic automorphisms of $X$, then $S_G(X)$ contains no numerical wall.
\end{proposition}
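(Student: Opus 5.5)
The plan is to split the statement into two parts: the wall-divisor statement for a single algebraic class, and the consequence for the coinvariant lattice $S_G(X)$.

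For the first part, take an algebraic class $D$ satisfying Definition~\ref{def:numerical-wall} relative to the Markman embedding. Using the Hodge structure on $\Mukai$ induced from $H^2(X,\Z)\oplus\Z v$, the lattice $\Pi$ consists of algebraic classes, so the class $\rho$ from condition~\eqref{item:wall-short} is algebraic. First deform $(X,D)$ within its Hodge-theoretic locus, keeping $D$ algebraic, to a moduli space $M_\sigma(v)$ of Bridgeland-stable objects on a projective K3 surface with a generic stability condition $\sigma$. Bayer--Macr\`i \cite[Theorems~5.7 and 12.1]{BayerMacri} then show that $\Pi$ gives a wall in the space of stability conditions. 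Its image in $NS(M_\sigma(v))$ under the Mukai morphism is a hyperplane $D^\perp$ that bounds a chamber of the movable cone, or is a flopping or divisorial wall. Either way $D^\perp$ misses the ample cone of every birational model.

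Next, the property of being a wall divisor is deformation invariant along parallel transport that preserves the Hodge type of $D$ (Mongardi \cite{MongardiCones}, also Bayer--Hassett--Tschinkel \cite{BayerHassettTschinkel} and Amerik--Verbitsky). The numerical conditions depend only on $(D,v)$ inside $\Mukai$, and monodromy preserves the Markman embedding up to sign. So $D^\perp\cap\mathcal K_X=\varnothing$ holds on the original $X$. I expect this transfer step to be the main obstacle. It needs care to ensure that $D$ stays algebraic, and that the statement about the Kähler cone, rather than just the ample cone, persists for non-projective $X$. I would handle it by citing Mongardi's result that wall divisors are invariant under parallel transport preserving their Hodge type.

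For the group statement, let $G$ act symplectically and finitely. Then $H^{2,0}$ is contained in $H^2(X,\C)^G$, so $S_G(X)$ is orthogonal to $H^{2,0}$ and consists of algebraic classes. Average a Kähler class over $G$ to get a $G$-invariant Kähler class $\omega$. Then $\omega$ lies in $H^2(X,\R)^G$, so $(\omega,D)=0$ for every $D\in S_G(X)$, and $\omega$ lies in $D^\perp\cap\mathcal K_X$. By the first part, no primitive $D\in S_G(X)$ satisfies the numerical wall condition. Hence $S_G(X)$ contains no numerical wall.
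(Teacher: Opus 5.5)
Your route is sound, but it differs from the paper's.

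\textbf{The paper's route.} It separates the projective and non-projective cases. For projective $X$ it quotes the Bayer--Hassett--Tschinkel Mori cone theorem. That theorem puts the curve class dual to the projection of $\rho$ onto $v^\perp$ (a nonzero multiple of $D$) into the Mori cone, so no K\"ahler class is orthogonal to $D$. For arbitrary $X$ it runs the twistor family of a hypothetical $\omega\in D^\perp\cap\mathcal K_X$. Along this family $D$ stays algebraic and orthogonal to a K\"ahler class. The paper picks a projective fibre by Huybrechts' criterion and uses that parallel transport extends to $\Mukai$, so it preserves Definition~\ref{def:numerical-wall}.

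\textbf{Your route.} You transport $D$ to a class $D'$ on a Bridgeland moduli space $M=M_\sigma(v)$. From Bayer--Macr\`i's Theorem~12.1 you read off that $D'^\perp=\theta(v^\perp\cap\rho^\perp)$, with $\theta$ the Mukai morphism, is a hyperplane of the decomposition in which the nef cone of each $M_{\sigma'}(v)$, hence of each birational model, is a chamber. You then return via Mongardi's invariance of wall divisors, which also absorbs the non-projective case. This unpacks the proof of the Bayer--Hassett--Tschinkel theorem rather than citing it. The price is heavier black boxes and three checks you leave implicit.

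\emph{Existence of the target.} Give $\Mukai$ a generic Hodge structure making $\Pi$ and a hyperbolic plane in $\Pi^\perp$ algebraic, and realize it by a projective K3 surface via surjectivity of the period map. Then Markman's description of parallel transport operators (orientation-preserving isometries compatible with the Mukai embeddings, after replacing $f$ by $-f$ if necessary) gives $f\colon H^2(X,\Z)\to H^2(M,\Z)$ with $D'=f(D)$ algebraic. Mongardi's theorem needs only such an $f$, not a path inside the Hodge locus of $D$.

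\emph{Mongardi's hypothesis.} His input is a wall divisor in his sense: $g(D'^\perp)\cap\mathcal{BK}_M=\varnothing$ for every Hodge monodromy $g$, where $\mathcal{BK}_M$ is the birational K\"ahler cone. This holds because such $g$ extend to Hodge isometries of $\Mukai$ with $v\mapsto\pm v$, and therefore preserve the numerical condition.

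\emph{Ample versus K\"ahler.} On a projective model, the $\mathrm{NS}_\R$-component of a K\"ahler class is still K\"ahler, hence ample. So missing every ample cone of every birational model suffices.

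\textbf{Comparison.} The paper's twistor reduction is lighter, since the cone statement is needed only on one projective fibre. Your final averaging step for $S_G(X)$ is the same as the paper's.
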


\begin{proof}
The projective case is a direct consequence of the Mori cone theorem of Bayer--Hassett--Tschinkel \cite[Theorem~1]{BayerHassettTschinkel}. Indeed, suppose that $X$ is projective and choose $\rho$ as in condition~\eqref{item:wall-short}. Since $\rho\in\Q v+\Q D$ and $D\perp v$, its projection $r=\rho-\frac{(\rho,v)}{v^2}v$ is a nonzero rational multiple of $D$. Under Beauville--Bogomolov duality, $r$ corresponds to a nonzero class $C\in H_2(X,\Q)$. Choose an ample class $h$ with $(h,D)\neq0$, and choose the sign of $C$ so that it pairs positively with $h$. The theorem then places $C$ in the Mori cone. Every K\"ahler class has positive intersection with $C$, so it cannot be orthogonal to $D$.

For general $X$, suppose that $\omega\in D^\perp\cap\mathcal K_X$, and let $\sigma$ be a holomorphic symplectic form. The hyperk\"ahler metric with K\"ahler class $\omega$ gives a twistor family whose period planes range over all oriented two-planes in the positive three-space $W=\langle\operatorname{Re}\sigma,\operatorname{Im}\sigma,\omega\rangle_\R$ \cite[Sections~1.13 and~1.17]{HuybrechtsBasicResults}. Since $D$ is algebraic and $(D,\omega)=0$, we have $D\perp W$. Thus the transported class $D_t$ is of type $(1,1)$ on every fibre, and the K\"ahler class $\omega_t\in W$ induced by the metric satisfies $(D_t,\omega_t)=0$.

This family contains a projective fibre $Y$. Indeed, choose $h\in H^2(X,\Z)$ with $h^2>0$. Since $W^\perp$ is negative definite, $h\notin W^\perp$, so $W\cap h^\perp$ is a positive two-plane. On a fibre with this period plane, the class $h$ has type $(1,1)$, so the fibre is projective by \cite[Theorem~2]{HuybrechtsProjectivity}. Parallel transport extends to an isometry of the Mukai lattice \cite[Theorem~9.8]{MarkmanSurvey}, so it preserves the numerical wall condition. Hence $D_Y$ is a numerical wall, while $\omega_Y\in D_Y^\perp\cap\mathcal K_Y$, contradicting the projective case.

For the final assertion, $S_G(X)$ is algebraic because $G$ fixes $H^{2,0}(X)$. Choose a K\"ahler class $\omega_0$ and let $\omega=\frac1{|G|}\sum_{g\in G}g^*\omega_0$. The K\"ahler cone is convex, so $\omega\in\mathcal K_X$. Moreover, $\omega$ is $G$-invariant and therefore orthogonal to $S_G(X)$. If a primitive class $D\in S_G(X)$ were a numerical wall, the preceding argument would give $D^\perp\cap\mathcal K_X=\varnothing$. This contradicts $\omega\in D^\perp\cap\mathcal K_X$.
\end{proof}

Now let $G$ be a finite group of symplectic automorphisms of $X$, and put $M=S_G(X)$. As an orthogonal complement, $M$ is primitive in $H^2(X,\Z)$. It is algebraic and negative definite of rank at most $20$ \cite[Lemma~24(1)--(2)]{MongardiClassification}. Proposition~\ref{prop:geometry} implies that $M$ is rootless, since any vector in $M$ of norm $-2$ is a numerical wall relative to $v$.

The action of $G$ on the discriminant group $A_M$ is trivial \cite[Lemma~24(4)]{MongardiClassification}; an alternative proof was provided in \cite[Theorem~1.1]{FuWangZhengStability}. Since $M^G=0$ and $G$ acts faithfully on $M$, the pair $(M(-1),G)$ is a Leech pair. Moreover, the primitive embedding of $M$ into the unimodular Mukai lattice of rank $24$ gives $\rk M+\ell(A_M)\leq24$.

Lemma~\ref{lem:leech-embedding} therefore gives a primitive embedding $M(-1)\hookrightarrow\Leech$ and extends the $G$-action by the identity on its orthogonal complement. Its image is the Leech coinvariant lattice $S=S_G(\Leech)$, so $M\simeq S(-1)$ as $G$-lattices.

\section{Bounds on Minimum Norms in Discriminant Classes}\label{sec:minima}

\begin{definition}\label{def:discriminant-minimum}
For an integral positive-definite lattice $L$ and a class $\xi\in A_L$, let $\mu_L(\xi)=\min_{x\in\xi}x^2$ denote the minimum norm in $\xi$.
\end{definition}

In this section, we derive lower and upper bounds for $\mu_S$ to restrict the possible values of $n$ when $\alpha(S)=0$.

\subsection{Lower bounds from numerical walls}\label{sec:lower}

We first construct the discriminant class used in the lower bound.

Throughout this subsection, let $S$ be an even positive-definite lattice, let $v\in\Mukai$ be primitive with $v^2=2n-2$, where $n\geq2$, and let $M\simeq S(-1)$ be a primitive sublattice of $v^\perp$. Set $R=M^\perp_{\Mukai}$, and let $a=\divi_R(v)$ be the divisibility of $v$ in $R$, the positive generator of $(v,R)\subset\Z$.

Unimodular gluing gives an isometry $\gamma:(A_R,q_R)\xrightarrow{\sim}(A_M,-q_M)$ \cite[Proposition~1.6.1]{Nikulin}. Identify $A_M$ with $A_S$, and let $c\in A_S$ be the image of $\frac{v}{a}+R$ under $\gamma$.

\begin{lemma}\label{lem:gluing-class}
Suppose additionally that $S=S_G(\Leech)$ for a finite subgroup $G\subset O(\Leech)$. Then the following hold:
\begin{enumerate}
\item\label{eq:divisibility} $\cont(R)=\cont(\Leech^G)$, $\cont(\Leech^G)\mid a$ and $a\mid(2n-2)$.
\item\label{eq:glueclass} The class $c$ has order $a$ and satisfies $q_S(c)\equiv\frac{2n-2}{a^2}\pmod{2\Z}$.
\end{enumerate}
If $\alpha(S)=0$, then $\cont(\Leech^G)>1$ and $a>1$.
\end{lemma}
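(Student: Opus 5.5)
The plan is to compare $R$ and $\Leech^G$ through their discriminant groups, and then read off the order and the norm of $c$ directly from the definition of $a$.

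\emph{Step 1: contents agree.} Since $M\simeq S(-1)$ is primitive in the unimodular lattice $\Mukai$, we have $\rk R=24-\rk S$. Moreover $A_R\simeq A_M\simeq A_S$ as groups. Likewise, $S=S_G(\Leech)$ is primitive in the unimodular lattice $\Leech$, so $\rk\Leech^G=24-\rk S$ and $A_{\Leech^G}\simeq A_S$ as groups.

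Both $R$ and $\Leech^G$ are nonzero. For $R$ this holds because it contains $v$. For $\Leech^G$ it holds because $\rk S\le 20$, which comes from $M$ being negative definite inside $v^\perp$ of signature $(3,20)$.

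So $R$ and $\Leech^G$ have the same rank and isomorphic discriminant groups. By Lemma~\ref{lem:content}, the rank and the discriminant group determine the content, so $\cont(R)=\cont(\Leech^G)$. Only the abstract group enters here; the sign of the discriminant form is irrelevant.

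\emph{Step 2: divisibility.} Since $v\in R$, every pairing $(v,x)$ with $x\in R$ lies in $\cont(R)\Z$. Hence $\cont(R)$ divides the generator $a$ of $(v,R)$. Taking $x=v$ gives $a\mid v^2=2n-2$. This proves \eqref{eq:divisibility}.

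\emph{Step 3: the glue class $c$.} By definition of $a$ we have $(v/a,R)\subset\Z$, so $v/a\in R^\vee$.

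Its order in $A_R$ is the least $k>0$ with $\frac{k}{a}v\in R$. Now $v$ is primitive in $\Mukai$, hence primitive in $R$. Therefore $\frac{k}{a}v\in R$ forces $\frac ka\in\Z$, so the order is exactly $a$. Since $\gamma$ is a group isomorphism, $c$ also has order $a$.

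For the norm, $q_R(v/a+R)=\frac{v^2}{a^2}=\frac{2n-2}{a^2}$ modulo $2\Z$. The gluing isometry gives $q_M(c)=-q_R(v/a+R)$. Under the identification $A_M=A_S$ we have $q_S=-q_M$, because $M\simeq S(-1)$. Combining these gives $q_S(c)\equiv\frac{2n-2}{a^2}\pmod{2\Z}$, which is \eqref{eq:glueclass}.

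\emph{Step 4: the case $\alpha(S)=0$.} If $\alpha(S)=0$, then $\rk\Leech^G=\ell(A_{\Leech^G})$. Lemma~\ref{lem:content} then gives $\cont(\Leech^G)>1$. Step 2 gives $\cont(\Leech^G)\mid a$, so $a>1$.

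The only step needing care is Step 3. There one must keep the signs straight between $q_R$, $q_M$ and $q_S$, and use the primitivity of $v$ in $R$ to conclude that the order of $c$ is exactly $a$ rather than merely a divisor of $a$.
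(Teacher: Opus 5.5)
Your proposal is correct and follows the paper's proof step for step. The equal ranks and isomorphic discriminant groups of $R$ and $\Leech^G$ feed Lemma~\ref{lem:content}; $v\in R$ gives $\cont(R)\mid a\mid v^2$; primitivity of $v$ in $R$ gives the order of $c$; and the gluing isometry combined with $q_M=-q_S$ gives $q_S(c)$. You only make the sign bookkeeping and the final deduction $a>1$ more explicit than the paper does.
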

\begin{proof}
The embedding $M\subset v^\perp$ implies $\rk S\leq20$, so $\Leech^G$ is nonzero. Unimodular gluing gives $A_R\simeq A_S\simeq A_{\Leech^G}$ and $\rk R=\rk(\Leech^G)$, so Lemma~\ref{lem:content} yields $\cont(R)=\cont(\Leech^G)$. Since $v\in R$, we have $\cont(\Leech^G)\mid a$; also $a\mid v^2=2n-2$. Primitivity of $v$ gives the order in~\eqref{eq:glueclass}, and the isometry of discriminant forms gives its quadratic value. If $\alpha(S)=0$, then $\ell(A_{\Leech^G})=\rk(\Leech^G)$, so $\cont(\Leech^G)>1$ by Lemma~\ref{lem:content}.
\end{proof}

The following proposition adapts Mongardi's argument for producing numerical walls \cite[Proposition~31]{MongardiClassification}, giving lower bounds for minimum norms in individual discriminant classes.

\begin{proposition}\label{prop:depth}
If $M$ contains no numerical wall relative to $v$ and $a>1$, then $\mu_S(kc)\geq4+\frac{k^2(2n-2)}{a^2}$ for $1\leq k\leq\lfloor a/2\rfloor$.
\end{proposition}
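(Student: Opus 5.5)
We prove the contrapositive. Fix $1\leq k\leq\lfloor a/2\rfloor$ and suppose $\mu_S(kc)<4+\frac{k^2(2n-2)}{a^2}$. We will produce a numerical wall in $M$, contradicting the hypothesis.

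\textbf{Constructing the glued vector.} Pick $x\in S^\vee$ representing $kc$ with $x^2=\mu_S(kc)$, and let $m\in M^\vee$ be the corresponding vector of norm $-x^2$. By definition of $c$ through the gluing isometry $\gamma$, the class $\frac{k}{a}v+R\in A_R$ is glued to $kc\in A_M$. Hence, after possibly replacing $x$ by a representative of $\pm kc$ (both classes have the same minimum), the vector
\[
\rho=m+\tfrac{k}{a}v
\]
lies in $\Mukai$. Replacing $\rho$ by $\rho+r$ with $r\in R$ is not needed; we keep the component in $R\otimes\Q$ equal to $\frac{k}{a}v$. Then
\[
\rho^2=-\mu_S(kc)+\frac{k^2(2n-2)}{a^2}>-4,
\qquad (\rho,v)=\frac{k(2n-2)}{a}.
\]
Since $\Mukai$ is even, $\rho^2\geq-2$. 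Since $1\leq k\leq a/2$, we have $0<(\rho,v)\leq\frac{2n-2}{2}=\frac{v^2}{2}$.

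\textbf{Locating the wall.} The class $kc$ is nonzero because $c$ has order $a>k$ (Lemma~\ref{lem:gluing-class}). Therefore $m\notin M$, and in particular $m\neq0$, so $\rho\notin\Q v$. Let $D$ be the primitive generator of $\Q m\cap M$; it lies in $M\subset v^\perp$ and satisfies $D^2<0$ by negative definiteness. Then $\rho\in(\Q v+\Q D)\cap\Mukai=\Pi$, and condition~\eqref{item:wall-short} of Definition~\ref{def:numerical-wall} holds for $D$. So $D\in M$ is a numerical wall, which is the required contradiction.

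\textbf{Main obstacle.} The delicate point is the sign and identification bookkeeping in the gluing step. We need to check that, under the convention $q_R\circ\gamma^{-1}=-q_M$ and the identification $A_M\cong A_S$ through $M\simeq S(-1)$, the element of $\Mukai$ lying over $\frac{k}{a}v$ really has $M$-component in the class corresponding to $kc$ (or $-kc$). We also need that the minimum norm is attained on that class. The symmetry $\mu_S(\xi)=\mu_S(-\xi)$ handles the sign ambiguity. The remaining inequalities are immediate.
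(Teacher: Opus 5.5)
Your proposal is correct and follows the paper's proof: you glue a minimal representative of $kc$ to $\frac{k}{a}v$ to get $\rho\in\Mukai$, obtain $\rho^2\geq-2$ from evenness and $0<(\rho,v)\leq v^2/2$ from $k\leq a/2$, and apply the first condition of the wall definition to the primitive generator $D$ of $M\cap\Q m$. You leave implicit that $D$ is primitive in $v^\perp$, as the definition requires, which holds because $M$ is primitive; your sign worry is unnecessary since $c$ is defined through $\gamma$ itself, but the $\pm kc$ fix is harmless.
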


\begin{proof}
Fix $k$ satisfying $1\leq k\leq\lfloor a/2\rfloor$. Under the identification $A_M=A_S$, choose $y\in M^\vee$ representing $kc$ with $-y^2=\mu_S(kc)$. Since $v$ is primitive in $R$, the class $c$ has order $a$, so $kc\neq0$ and $y\neq0$.

Recall that $\gamma:(A_R,q_R)\xrightarrow{\sim}(A_M,-q_M)$ is the isometry determined by the unimodular gluing of $M$ and $R$ in $\Mukai$. The equality $\gamma(\frac{k}{a}v+R)=y+M$ gives $\rho=\frac{k}{a}v+y\in\Mukai$. Then $\rho^2=\frac{k^2v^2}{a^2}-\mu_S(kc)\in2\Z$ and $(\rho,v)=\frac{k}{a}v^2$.

Let $D$ be a primitive generator of $M\cap\Q y$. Since $M$ is primitive in $\Mukai$, so is $D$. Moreover, $D^2<0$ and $\rho\in\Q v+\Q D$ with $\rho\notin\Q v$.

Suppose $\mu_S(kc)<4+\frac{k^2v^2}{a^2}$. Then $\rho^2>-4$, so evenness of $\Mukai$ gives $\rho^2\geq-2$. Since $0<(\rho,v)=\frac{k}{a}v^2\leq\frac{v^2}{2}$, Definition~\ref{def:numerical-wall} makes $D$ a numerical wall in $M$, a contradiction. Therefore $\mu_S(kc)\geq4+\frac{k^2(2n-2)}{a^2}$.
\end{proof}

\subsection{Upper bounds from the Leech lattice}\label{sec:range}

We bound $\mu_S$ by projecting shortest representatives of classes in $\Leech/2\Leech$ onto $S_\R$. The required estimate follows from a comparison of the Voronoi cells of $S$ and $S^\vee$.

Let $Q:\Leech\to\Z/2\Z$ be given by $Q(z)=\frac{z^2}{2}\bmod2$. It is constant on each coset of $2\Leech$.

For a positive-definite lattice $L$, not necessarily integral, write $\Vor(L)$ for its closed Voronoi cell centered at zero (see \cite[Chapter~2, \S~1.2]{ConwaySloane}) defined by
\[
 \Vor(L)=\{x\in L_\R:2(x,z)\leq z^2\text{ for all }z\in L\}.
\]
The identity $(x-z)^2-x^2=z^2-2(x,z)$ shows that $x\in\Vor(L)$ precisely when $x$ has minimum norm in $x+L$.

If $G$ fixes no vector of $L$ and acts trivially on $A_L$, averaging gives the following comparison:

\begin{lemma}\label{lem:averaging}
Let $L$ be an integral positive-definite lattice, and let $G\subset O(L)$ be a finite subgroup acting trivially on $A_L$ with $L^G=0$. Then $\Vor(L)\subseteq2\Vor(L^\vee)$.
\end{lemma}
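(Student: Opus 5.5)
The plan is to unwind the definition of $2\Vor(L^\vee)$ and then bound $(x,w)$ for $x\in\Vor(L)$ by averaging over $G$. By definition, $x\in2\Vor(L^\vee)$ means $\frac{x}{2}\in\Vor(L^\vee)$, that is, $2(\tfrac x2,w)\le w^2$ for all $w\in L^\vee$. So it suffices to show that every $x\in\Vor(L)$ satisfies
\[
(x,w)\le w^2\quad\text{for all } w\in L^\vee .
\]

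Fix $w\in L^\vee$. There are two inputs from the hypotheses.
\begin{enumerate}
\item Since $G$ acts trivially on $A_L=L^\vee/L$, for each $g\in G$ the vector $z_g=w-gw$ lies in $L$.
\item Since $L^G=0$, the fixed subspace $(L_\R)^G$ is zero. This holds because $(L_\Q)^G$ is spanned by averages of vectors of $L$, and any nonzero fixed rational vector would have a nonzero integral multiple in $L^G$, which is impossible. The average $\frac1{|G|}\sum_g gw$ is $G$-fixed, so it vanishes, giving $\sum_{g\in G}gw=0$.
\end{enumerate}
Hence $\sum_g z_g=|G|\,w$.

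Now apply the Voronoi inequality for $x\in\Vor(L)$ to each $z_g\in L$:
\[
2(x,z_g)\le z_g^2=2w^2-2(w,gw),
\]
using that $g$ is an isometry, so $(gw)^2=w^2$. Summing over $g\in G$ gives
\[
2|G|(x,w)\le 2|G|\,w^2-2\Bigl(w,\sum_g gw\Bigr)=2|G|\,w^2 .
\]
This is exactly $(x,w)\le w^2$, which proves the claim.

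There is no real obstacle. The only point requiring care is the vanishing $\sum_g gw=0$ for a vector $w\in L^\vee$ rather than one in $L$. This is handled by passing to $L_\Q=L^\vee_\Q$, as in item 2, where the hypothesis $L^G=0$ already forces the invariant subspace to be trivial.
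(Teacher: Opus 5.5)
Your proof is correct and is essentially the paper's argument: apply the Voronoi inequality to $w-gw\in L$, sum over $G$, and use $\sum_g gw=0$ together with $(gw)^2=w^2$ to get $(x,w)\le w^2$. Your justification of $\sum_g gw=0$ for $w\in L^\vee$ by working in $L_\Q$ correctly fills in a step that the paper states in one line.
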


\begin{proof}
Assume $x\in\Vor(L)$ and $y\in L^\vee$. Since $G$ acts trivially on $A_L$, one has $y-gy\in L$ and hence $2(x,y-gy)\leq(y-gy)^2$. Since $\sum_g gy\in L_\R^G$, we have $\sum_g gy=0$. As $(gy)^2=y^2$, averaging the above inequality over $G$ gives
\[
 2(x,y)
 =\frac1{|G|}\sum_{g\in G}2(x,y-gy)
 \leq\frac1{|G|}\sum_{g\in G}(y-gy)^2
 =2y^2.
\]
Thus $2(\frac{x}{2},y)\leq y^2$ for every $y\in L^\vee$, which implies $\frac{x}{2}\in\Vor(L^\vee)$. The conclusion follows.
\end{proof}

For a finite subgroup $G\subset O(\Leech)$, put $S=S_G(\Leech)$ and write $\pr_S$ and $\pr_{\Leech^G}$ for the orthogonal projections of $\Leech_\R$ onto $S_\R$ and $(\Leech^G)_\R$, respectively. Applying Lemma~\ref{lem:averaging} to $(S,G)$ gives the following bound.

\begin{corollary}\label{cor:projection}
Let $G\subset O(\Leech)$ be finite, let $S=S_G(\Leech)$, and let $x\in S^\vee$ satisfy $x^2=\mu_S([x])$. There exists $u\in\Leech$ with $\pr_S(u)=x$. For every such $u$ and every $w\in u+2\Leech$, one has $\mu_S([x])=x^2\leq\pr_S(w)^2\leq w^2$.
\end{corollary}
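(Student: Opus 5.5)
The plan has three steps: find a lattice vector projecting onto $x$, bound $\pr_S(w)^2$ below by $x^2$ using Lemma~\ref{lem:averaging}, and bound it above by $w^2$ using orthogonality.

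\textbf{Step 1: existence of $u$.} Since $S$ is primitive in the unimodular lattice $\Leech$, the projection $\pr_S$ maps $\Leech$ onto $S^\vee$. Indeed, every $x\in S^\vee$ defines a homomorphism $(x,\cdot)\colon S\to\Z$. Primitivity lets this extend to a homomorphism $\Leech\to\Z$, and unimodularity represents that extension as $(u,\cdot)$ for some $u\in\Leech$. Then $u-x$ is orthogonal to $S$, so $\pr_S(u)=x$.

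\textbf{Step 2: the lower bound $x^2\le\pr_S(w)^2$.} Write $w=u+2z$ with $z\in\Leech$. Then $\pr_S(w)=x+2\pr_S(z)$, and $\pr_S(z)\in S^\vee$ by Step~1, so $\pr_S(w)\in x+2S^\vee$. It therefore suffices to show that $x$ has minimal norm in the coset $x+2S^\vee$, i.e.\ that $x\in\Vor(2S^\vee)=2\Vor(S^\vee)$. The hypothesis $x^2=\mu_S([x])$ says exactly that $x$ has minimal norm in $x+S$, i.e.\ $x\in\Vor(S)$.

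The inclusion $\Vor(S)\subseteq2\Vor(S^\vee)$ is Lemma~\ref{lem:averaging} applied to $L=S$. Its hypotheses hold: $S$ is integral and positive definite, $S^G=0$ because $S=(\Leech^G)^\perp$, and $G$ acts trivially on $A_S$. The last point holds because $G$ fixes $\Leech^G$ pointwise and the gluing isomorphism $A_S\simeq A_{\Leech^G}$ intertwines the $G$-actions, as recalled in the preliminaries. Hence $x^2\le(x+2y)^2$ for every $y\in S^\vee$, and in particular $x^2\le\pr_S(w)^2$.

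\textbf{Step 3: the upper bound $\pr_S(w)^2\le w^2$.} This is immediate from the orthogonal decomposition $w=\pr_S(w)+\pr_{\Leech^G}(w)$ together with positive definiteness.

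The only subtle point is Step 2. One has to recognise that the relevant coset is $x+2S^\vee$ and not $x+2S$, and then that the Voronoi comparison $\Vor(S)\subseteq2\Vor(S^\vee)$ is precisely what converts minimality modulo $S$ into minimality modulo $2S^\vee$.
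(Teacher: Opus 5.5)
Your proof is correct and follows essentially the same route as the paper's: lift $x$ to $u\in\Leech$, use Lemma~\ref{lem:averaging} to get $\Vor(S)\subseteq 2\Vor(S^\vee)$, so that $x$ is shortest in the coset $x+2S^\vee$ containing $\pr_S(w)$, and finish with the orthogonal decomposition. The only difference is that you prove surjectivity of $\pr_S\colon\Leech\to S^\vee$ directly, by extending $(x,\cdot)$ from the primitive sublattice $S$ to $\Leech$, whereas the paper cites Nikulin's gluing; the containment $\pr_S(\Leech)\subseteq S^\vee$ you use for $\pr_S(z)$ is the trivial direction and needs no appeal to Step~1.
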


\begin{proof}
We have $S^G=0$ and $S^\perp_\Leech=\Leech^G$. Since $G$ fixes $\Leech^G$ pointwise, its action on $A_S$ is trivial \cite[Corollary~1.5.2]{Nikulin}. Unimodular gluing makes $\pr_S:\Leech\to S^\vee$ surjective \cite[Proposition~1.6.1]{Nikulin}, so a lift $u$ exists. Since $x$ is the shortest in $x+S$, it lies in $\Vor(S)$. Lemma~\ref{lem:averaging} applied to $(S,G)$ gives $x/2\in\Vor(S^\vee)$. For $w\in u+2\Leech$, write $\pr_S(w)=x+2y$ with $y\in S^\vee$. The Voronoi inequality gives $(x,y)\geq-y^2$, hence $\pr_S(w)^2\geq x^2$. Orthogonal projection gives $\pr_S(w)^2\leq w^2$.
\end{proof}

Combining Corollary~\ref{cor:projection} with Lemma~\ref{lem:modtwo}, we have the upper bounds for $\mu_S$; moreover, if $\mu_S(d)>6$
for some $d\in A_S$, then $\cont(\Leech^G)$ must be even.

\begin{lemma}\label{lem:leech-bounds}
Let $G\subset O(\Leech)$ be finite and let $S=S_G(\Leech)$. Then for any $d\in A_S$ one has $\mu_S(d)\leq\max\{6,\frac{\rk S}{3}\}$. Moreover, if $\Leech^G\neq0$ and $\cont(\Leech^G)$ is odd, then $\mu_S(d)\leq 6$.
\end{lemma}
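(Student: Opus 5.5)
Fix $d\in A_S$ with $d\neq0$; for $d=0$ we have $\mu_S(d)=0$. Choose $x\in S^\vee$ of minimum norm in $d$, so $x^2=\mu_S(d)$. Corollary~\ref{cor:projection} gives $u\in\Leech$ with $\pr_S(u)=x$, and for every $w\in u+2\Leech$ it gives $\mu_S(d)\leq\pr_S(w)^2$. It therefore suffices to find one $w$ in the class $u+2\Leech$ whose projection onto $S_\R$ is short.

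First suppose $u\in2\Leech$. Then $w=0$ lies in the class and gives $\mu_S(d)\leq0$, which is impossible for $d\neq0$. So $u+2\Leech$ is a nonzero class of $\Leech/2\Leech$, and we apply Lemma~\ref{lem:modtwo} to it. In types (1) and (2) the class contains a vector $w$ with $w^2\leq6$, so $\mu_S(d)\leq\pr_S(w)^2\leq w^2\leq6$.

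Now suppose the class has type (3), with shortest vectors $\pm w_1,\dots,\pm w_{24}$ of norm $8$ that are mutually orthogonal. These form an orthogonal basis of $\Leech_\R$ with $w_i^2=8$. For any $z\in\Leech_\R$ this basis gives $z^2=\sum_i(z,w_i)^2/8$. Let $P$ denote the orthogonal projection onto $S_\R$. Then
\[
\sum_{i=1}^{24}\pr_S(w_i)^2=\sum_i(w_i,Pw_i)=8\operatorname{tr}P=8\rk S.
\]
Some index $i$ therefore satisfies $\pr_S(w_i)^2\leq\frac{8\rk S}{24}=\frac{\rk S}{3}$. This gives $\mu_S(d)\leq\frac{\rk S}{3}$, and combining the cases gives $\mu_S(d)\leq\max\{6,\frac{\rk S}{3}\}$.

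For the second statement we use parity. Suppose the class $u+2\Leech$ has type (3), so $Q(u)=0$. Since $\Leech$ is unimodular and $\Leech^G$ is primitive with $(\Leech^G)^\perp=S$, the map $\pr_{\Leech^G}$ sends $\Leech$ onto $(\Leech^G)^\vee$. Let $t=\pr_{\Leech^G}(u)$. The decomposition $u^2=x^2+t^2$ holds for all $w$ in the class; in particular $\pr_S(w)^2=w^2-\pr_{\Leech^G}(w)^2$, and $\pr_{\Leech^G}(w)\in t+2\Leech^G$ up to projection.

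The aim is to show that a type (3) class with $\mu_S(d)>6$ forces $\cont(\Leech^G)$ to be even. The most direct route is to pick the vector $w_i$ minimizing $\pr_S(w_i)^2$ and bound that projection by $6$ using the fixed part. The first thing to try is averaging over $i$ as above, but now using $\sum_i\pr_{\Leech^G}(w_i)^2=8\rk\Leech^G$, together with the integrality of $(w_i,y)$ for $y\in\Leech^G$. The hope is to find some $w_i$ with $\pr_{\Leech^G}(w_i)^2\geq2$, which gives $\pr_S(w_i)^2\leq6$.

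The main obstacle is the case in which every $\pr_{\Leech^G}(w_i)^2<2$. In that case I would argue as follows. Each $w_i$ pairs with $\Leech^G$ inside $\pr_{\Leech^G}(w_i)\in(\Leech^G)^\vee$. The $w_i$ are all congruent modulo $2\Leech$ up to sign, so $w_i\pm w_j\in2\Leech$. Hence $(w_i,y)\equiv(w_j,y)\pmod2$ for $y\in\Leech^G$. Expanding $y^2=\sum_i(y,w_i)^2/8$ and reading this modulo $2$ should then force $(y,y')\in2\Z$ for all $y,y'\in\Leech^G$, that is, $\cont(\Leech^G)$ even. This contradicts the hypothesis, so when $\cont(\Leech^G)$ is odd every class has a representative of norm at most $6$ after projection. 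Getting this parity step exactly right is the delicate point. A fallback is to use the equality case of the averaging: if all $\pr_S(w_i)^2>6$ then $\rk S>18$, and the restricted ranks may allow a direct check.
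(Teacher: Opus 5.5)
Your proof of the first inequality is correct and is the paper's argument: you take a minimal $x$, lift it to $u$, apply Corollary~\ref{cor:projection}, split by the type of $u+2\Leech$ in Lemma~\ref{lem:modtwo}, and use $\sum_i\pr_S(w_i)^2=8\rk S$ in the frame case.

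The second statement is not proved, and the parity step you sketch cannot work. For $y,y'\in\Leech^G$ one has $(y,y')=\frac18\sum_i(y,w_i)(y',w_i)$. Let $\epsilon,\epsilon'$ be the common parities of the $(y,w_i)$ and the $(y',w_i)$. The congruences $(y,w_i)\equiv(y,w_j)\pmod 2$ then only give $\sum_i(y,w_i)(y',w_i)\equiv 24\,\epsilon\epsilon'\equiv0\pmod2$. To conclude $(y,y')\in2\Z$ you would need this sum modulo $16$.

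In fact these congruences hold for every norm-$8$ frame and every pair of Leech vectors, since $w_i-w_j\in2\Leech$. Yet $\Leech$ is unimodular and has pairs with odd inner product, so no argument built on this data can force even content. Separately, the claim $\pr_{\Leech^G}(w)\in t+2\Leech^G$ is inaccurate: $\pr_{\Leech^G}(2\Leech)=2(\Leech^G)^\vee$. The fallback ($\rk S>18$, then a direct check) would need a classification and is not carried out.

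The underlying problem is that you look at the single class $u+2\Leech$. The missing idea is to use the freedom in the lift. For every $t\in\Leech^G$ one has $\pr_S(u+t)=x$, so Corollary~\ref{cor:projection} applies to every class $u+t+2\Leech$. If $x^2>6$, none of these classes can be zero or contain a vector of norm $4$ or $6$. By Lemma~\ref{lem:modtwo}, each is then a frame class, so $Q(u+t)=0$ for all $t\in\Leech^G$. The polarization identity $(t_1,t_2)\equiv Q(u+t_1+t_2)+Q(u+t_1)+Q(u+t_2)+Q(u)\pmod 2$ now gives $(t_1,t_2)\in2\Z$ for all $t_1,t_2\in\Leech^G$. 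Hence $\cont(\Leech^G)$ is even, which is exactly how the paper concludes.
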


\begin{proof}
Fix $d\in A_S$ and choose $x\in S^\vee$ representing $d$ with $x^2=\mu_S(d)$. Corollary~\ref{cor:projection} gives a lift $u\in\Leech$ with $\pr_S(u)=x$ and $x^2\leq\pr_S(w)^2\leq w^2$ for every $w\in u+2\Leech$.
If $u\in2\Leech$, taking $w=0$ gives $x=0$. If $u\notin2\Leech$, Lemma~\ref{lem:modtwo} says that the class $u+2\Leech$ contains either a vector $w$ with $w^2\in\{4,6\}$ or twenty-four pairs $\{\pm w_i\}_{i=1}^{24}$ of shortest vectors satisfying $(w_i,w_j)=8\delta_{ij}$. In the first case, Corollary~\ref{cor:projection} gives $x^2\leq\pr_S(w)^2\leq w^2\leq6$. In the second case, the vectors $w_i/\sqrt8$ form an orthonormal basis of $\Leech_\R$. Applying~Corollary~\ref{cor:projection} to each $w_i$ gives
\[
 24x^2\leq\sum_{i=1}^{24}\pr_S(w_i)^2
 =\sum_{i=1}^{24}(\pr_S(w_i),w_i)
 =8\operatorname{tr}(\pr_S)=8\rk S.
\]

Assume $x^2>6$. We have to show that $\cont(\Leech^G)$ is even. For every $t\in \Leech^G$, one has $\pr_S(u+t)=x$, so Corollary~\ref{cor:projection} and Lemma~\ref{lem:modtwo} give $6<x^2\leq\min_{w\in u+t+2\Leech}w^2=8$. So, we have $Q(u+t)=0$ for every $t\in \Leech^G$. Hence, for $t_1,t_2\in \Leech^G$,
\[
  (t_1,t_2)\equiv
  Q(u+t_1+t_2)+Q(u+t_1)+Q(u+t_2)+Q(u)=0\pmod2.
\]
Thus $\cont(\Leech^G)$ is even, which implies the conclusion.
\end{proof}

For classes of order three, the norm-$6$ case of Lemma~\ref{lem:modtwo} sharpens the bound in Lemma~\ref{lem:leech-bounds}:

\begin{lemma}\label{lem:order-three-bound}
Let $G\subset O(\Leech)$ be finite and let $S=S_G(\Leech)$. If $d\in A_S$ has order three, then $\mu_S(d)\neq6$. If additionally $\cont(\Leech^G)$ is odd, then $\mu_S(d)<6$.
\end{lemma}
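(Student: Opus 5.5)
The plan is a case analysis of the class $u+2\Leech$, following the proof of Lemma~\ref{lem:leech-bounds}. Assume $\mu_S(d)=6$ and fix $x\in S^\vee$ representing $d$ with $x^2=6$. Corollary~\ref{cor:projection} gives a lift $u\in\Leech$ with $\pr_S(u)=x$ and $6\leq\pr_S(w)^2\leq w^2$ for all $w\in u+2\Leech$. Since $d\neq0$ we have $u\notin2\Leech$, so Lemma~\ref{lem:modtwo} applies to $u+2\Leech$. A norm-$4$ representative is impossible, since it would give $x^2\leq4$. This leaves the norm-$6$ case and the norm-$8$ frame case.

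In the norm-$6$ case, let $w\in u+2\Leech$ with $w^2=6$. Then $\pr_S(w)^2=w^2$, so $w\in S_\R\cap\Leech=S$. Write $w=x+2y$ with $y\in S^\vee$. Because $w\in S$, the class of $w$ in $A_S$ is zero, so $d=-2[y]$. Since $d$ has order three, this forces $[y]=d$, and hence $y^2\geq\mu_S(d)=6$. Expanding $6=w^2=x^2+4(x,y)+4y^2$ gives $(x,y)=-y^2\leq-6$. Cauchy--Schwarz gives $|(x,y)|\leq\sqrt6\,|y|$, so $y^2\leq6$. Therefore $y^2=6$ and equality holds in Cauchy--Schwarz, which gives $y=-x$. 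Then $w=-x$ lies in $S$, so $d=0$, a contradiction.

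The main obstacle is the frame case, where $u+2\Leech$ has $24$ pairs $\pm w_i$ of norm $8$ with $(w_i,w_j)=8\delta_{ij}$. Each $\pr_S(w_i)$ lies in $x+2S^\vee$ and has norm at least $6$. I would write $\pr_S(w_i)=x+2y_i$ and use the same order-three trick: whenever $[y_i]=d$, the inequalities $y_i^2\geq6$ and $(x,y_i)\geq-y_i^2$ (from $x/2\in\Vor(S^\vee)$) pin the norms down. Two further inputs would then be combined. First, the trace identity $\sum_i\pr_S(w_i)^2=8\rk S$ together with $\rk S\leq20$. Second, the freedom to replace $u$ by $u+t$ for $t\in\Leech^G$, and by $u-3x$, since $3x\in S\subset\Leech$ and $\pr_S(u-3x)=-2x$ also represents $d$. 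The aim is either to exhibit some $w\in u+t+2\Leech$ with $\pr_S(w)^2<6$ or to reduce back to the norm-$6$ situation. This is the step I expect to need the most care, possibly with a parity argument on $Q$ as in Lemma~\ref{lem:leech-bounds}.

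For the second assertion, suppose $\cont(\Leech^G)$ is odd. If $\Leech^G=0$, then $S=\Leech$ is unimodular and has no element of order three, so $\Leech^G\neq0$. Lemma~\ref{lem:leech-bounds} then gives $\mu_S(d)\leq6$. Combined with $\mu_S(d)\neq6$, this yields $\mu_S(d)<6$.
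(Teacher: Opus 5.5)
There is a genuine gap, in two places.

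\emph{The frame case is not proved.} You only list possible ingredients for it. The trace identity excludes a frame with all $\pr_S(w_i)^2\ge6$ only when $8\rk S<144$, that is, when $\rk S\le17$. The lemma does not assume $\rk S\le20$, and even with that assumption nothing in your sketch handles $\rk S\ge18$. The substitution $u\mapsto u-3x$ also does not help: $\pr_S(u-3x)=-2x$ lies in $2S^\vee$, so the Voronoi lower bound becomes trivial for that class.

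\emph{The norm-$6$ case contains a false step.} From $d=-2[y]$ you only get $2([y]-d)=0$, so $[y]=d+e$ with $2e=0$. The group $A_S$ can have $2$-torsion. For an arbitrary lift $u$, the congruence $w\equiv u\pmod{2\Leech}$ gives no control over $w-3x$. So you cannot conclude $y\in x+S$, and the bound $y^2\ge6$ is unjustified.

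\emph{The missing idea is to choose the mod-two class well.} Work with $3x\in S\subset\Leech$ instead of an arbitrary lift of $x$.
\begin{enumerate}
\item Since $Q(3x)=\tfrac{(3x)^2}{2}=27\equiv1\pmod 2$, Lemma~\ref{lem:modtwo} forces $3x+2\Leech$ to contain a norm-$6$ vector $w$. The norm-$4$ and frame cases therefore never arise.
\item One has $\pr_S(w)\in3x+2S^\vee=x+2S^\vee$. Hence the inequality coming from Lemma~\ref{lem:averaging} still gives $\pr_S(w)^2\ge6=w^2$, so $w\in S$.
\item Primitivity of $S$ then gives $w-3x\in2\Leech\cap S_\R=2S$. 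Thus $y=\frac{w-x}{2}=x+\frac{w-3x}{2}$ really lies in $x+S$.
\item Your Cauchy--Schwarz argument now finishes. Alternatively, choose the sign of $w$ with $(w,x)\ge0$; then $\bigl(\frac{w-x}{2}\bigr)^2=3-\frac{(w,x)}{2}\le3$.
\end{enumerate}
Within your own framework, this amounts to taking the lift $u=3x+2\ell$ with $\ell\in\Leech$ and $\pr_S(\ell)=-x$.

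Your treatment of the second assertion is correct, including the remark that $\Leech^G\neq0$.
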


\begin{proof}
Suppose $d\in A_S$ has order three and $\mu_S(d)=6$. Choose $x\in S^\vee$ representing $d$ with $x^2=6$. Then $3x\in S$. Since $Q(3x)=\frac{(3x)^2}{2}\equiv1\pmod2$, Lemma~\ref{lem:modtwo} gives a vector $w\in3x+2\Leech$ of norm $6$. Since $w\in3x+2\Leech$, we have $\pr_S(w)\in3x+2S^\vee=x+2S^\vee$. Lemma~\ref{lem:averaging} and the minimality of $x$ give $\pr_S(w)^2\geq x^2=6$. Since $w^2=6$, the orthogonal decomposition $w=\pr_S(w)+\pr_{\Leech^G}(w)$ forces $\pr_{\Leech^G}(w)=0$, hence $w\in S$. Since $S$ is primitive in $\Leech$, the congruence $w-3x\in2\Leech$ now gives $w-3x\in2S$. Choose the sign of $w$ so that $(w,x)\geq0$. Then $\frac{w-x}{2}\in x+S$ and $\bigl(\frac{w-x}{2}\bigr)^2=3-\frac{(w,x)}{2}\leq3$, a contradiction. If $\cont(\Leech^G)$ is odd, Lemma~\ref{lem:leech-bounds} also gives $\mu_S(d)\leq6$. The equality case is excluded above, so $\mu_S(d)<6$.
\end{proof}

When $\cont(\Leech^G)$ is odd, the function $Q$ takes the value one on a vector of $\Leech^G$. Lemma~\ref{lem:modtwo} then restricts $\cont(\Leech^G)$.

\begin{lemma}\label{lem:odd-content}
Let $G\subset O(\Leech)$ be a finite subgroup. If $\Leech^G\neq0$ and $\cont(\Leech^G)$ is odd, then $\Leech^G$ contains a vector of norm $6$ and $\cont(\Leech^G)\in\{1,3\}$.
\end{lemma}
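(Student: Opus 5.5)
If $\cont(\Leech^G)$ is odd, the plan is to produce a vector $t\in\Leech^G$ with $t^2/2$ odd, take its shortest representative in $t+2\Leech$ given by Lemma~\ref{lem:modtwo}, and transport that representative into $\Leech^G$ by averaging.

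\textbf{Step 1: find $t$ with $Q(t)=1$.} Since $\Leech^G$ is even, $\cont(\Leech^G)$ odd means the content gcd is attained by some odd pairing $(t_1,t_2)$. From
\[
(t_1+t_2)^2/2=t_1^2/2+t_2^2/2+(t_1,t_2),
\]
at least one of $t_1,t_2,t_1+t_2$ has odd half-norm. Call it $t\in\Leech^G$, so $Q(t)=1$.

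\textbf{Step 2: a norm-$6$ vector in $t+2\Leech$.} Since $Q(t)=1$, the class $t+2\Leech$ is nonzero. Its shortest representatives cannot have norm $4$ or $8$, because $Q$ is constant on the class and both of those norms give $Q=0$. By Lemma~\ref{lem:modtwo}, the class therefore has exactly one pair $\pm w$ of shortest vectors, with $w^2=6$.

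\textbf{Step 3: $w$ lies in $\Leech^G$.} For $g\in G$ we have $gt=t$, so $g$ preserves the class $t+2\Leech$. Hence $g$ permutes its unique shortest pair, giving $gw=\pm w$. I will compare two projections of $w-t$.
\begin{itemize}
\item Since $w-t\in2\Leech$ and $t\in\Leech^G$, we get $\pr_S(w)=\pr_S(w-t)\in 2\pr_S(\Leech)=2S^\vee$.
\item If some $g$ satisfies $gw=-w$, then $w\perp\Leech^G$, so $w\in S$. Then $w\in S\cap 2S^\vee$, and we also get $t=\pr_{\Leech^G}(t)=-\pr_{\Leech^G}(w-t)\in2(\Leech^G)^\vee$.
\end{itemize}
Rather than chase this sub-case directly, I will apply Corollary~\ref{cor:projection} with $u=w$. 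The class of $\pr_S(w)$ in $S^\vee/2S^\vee$ is zero, and the corollary applied with $x=0$ should force $\pr_S(w)=0$.

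Concretely, write $\pr_S(w)=x+2y$ where $x$ is the minimal representative of its class in $A_S$. The inequality in Corollary~\ref{cor:projection} only controls $x^2$, so I will instead argue that $w$ and $w-\pr_S(w)$ lie in the same class mod $2\Leech$. That requires $\pr_S(w)\in2\Leech$, which is the delicate point.

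A cleaner route is to use the norm estimate directly. Since $\pr_S(w)\in 2S^\vee$, write $\pr_S(w)=2y$ with $y\in S^\vee$. Lemma~\ref{lem:averaging} gives $0\in\Vor$-minimal in the zero class, so pick $s\in S$ with $2y-2s$ shortest in $2y+2S$. Then $w'=w-2s\in t+2\Leech$ satisfies
\[
w'^2=\pr_{\Leech^G}(w)^2+(2y-2s)^2 .
\]
By uniqueness of the shortest pair, $w'=\pm w$ unless $s=0$. I will show $2(y-s)$ is forced to vanish using $x/2\in\Vor(S^\vee)$ for $x$ minimal, applied with $x=0$.

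\textbf{Main obstacle.} This step, proving $\pr_S(w)=0$, is where I expect the difficulty to lie. My first attempt will be to sum over $g\in G$: since $\sum_g g\,\pr_S(w)=0$ and $gw=\pm w$, the averaged vector $\frac1{|G|}\sum_g gw$ is either $w$ (when every $g$ fixes $w$) or $0$. In the second case I will replace $w$ by the norm-$6$ vector obtained from $t+t'$ for a suitable second choice $t'$, or else derive a contradiction from $w\in S$ together with $t\in2(\Leech^G)^\vee$ and $Q(t)=1$.

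\textbf{Step 4: the content.} Once $w\in\Leech^G$ with $w^2=6$, the content $\cont(\Leech^G)$ divides $(w,w)=6$. Being odd, it lies in $\{1,3\}$.
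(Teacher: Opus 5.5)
Your Steps~1, 2 and~4 are fine, but Step~3 has a genuine gap. You never exclude the sub-case in which some $g\in G$ sends $w$ to $-w$ (equivalently $w\in S$), and none of the routes you sketch can exclude it.

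\begin{itemize}
\item Since $gw=\pm w$ for all $g$, the projection $\pr_S(w)$ is either $0$ or $w$. So ``$\pr_S(w)=0$'' is exactly the statement to be proved, not a tool for proving it.
\item Corollary~\ref{cor:projection} with $x=0$ yields only the trivial bound $0\le\pr_S(w)^2$.
\item In your $w'=w-2s$ argument, the choice $s=w$ gives $w'=-w$, which is perfectly consistent with uniqueness of the shortest pair.
\item Your fallback, deriving a contradiction from $w\in S$, $t\in2(\Leech^G)^\vee$ and $Q(t)=1$, also fails as stated, because these facts are compatible. Take any order-two class of value $\frac32$ in $A_S$, represented by $x$, and write $x=\pr_S(\tilde x)$ with $\tilde x\in\Leech$. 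Then $t=2x-2\tilde x$ lies in $\Leech^G$ and has $Q(t)=1$. Its class is $2x+2\Leech$, and the norm-$6$ pair of that class lies in $S$ by the proof of Lemma~\ref{lem:half-parity}.
\end{itemize}

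The missing idea is information that your Step~1 produces and then discards. Each of $t_1$, $t_2$, $t_1+t_2$ has odd pairing with some vector of $\Leech^G$: $(t_1,t_2)$ is odd, and $(t_1+t_2,t_1)=t_1^2+(t_1,t_2)$ is odd because $\Leech^G$ is even. (You only need, and only have, that some pairing $(t_1,t_2)$ is odd, not that it attains the gcd.) So you may choose $t$ with both $Q(t)=1$ and $(t,s_0)$ odd for some $s_0\in\Leech^G$.

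With that choice, the second bullet of your Step~3 already finishes the proof. If $w\in S$, then $t\in2(\Leech^G)^\vee$, that is, $(t,s)=(w,s)-(w-t,s)\in2\Z$ for every $s\in\Leech^G$, contradicting the choice of $t$. Hence $G$ fixes $w$, so $w\in\Leech^G$, and Step~4 applies. This is precisely the paper's argument, and the Voronoi and averaging discussion in the rest of your Step~3 can be deleted.
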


\begin{proof}
Let $S=S_G(\Leech)$. Because $\cont(\Leech^G)$ is odd, there are $t_1,t_2\in \Leech^G$ with $(t_1,t_2)$ odd, and hence $Q(t_1+t_2)=Q(t_1)+Q(t_2)+1$. Since $\Leech^G$ is even, each of $t_1,t_2,t_1+t_2$ has odd pairing with a vector of $\Leech^G$. The above identity shows that one of these vectors, say $t$, satisfies $Q(t)=1$.

By Lemma~\ref{lem:modtwo}, the class of $t$ in $\Leech/2\Leech$ contains a unique pair $\pm w$ of norm $6$. The group $G$ preserves the pair $\{\pm w\}$. If the $G$-action on this pair is nontrivial, the average of $w$ would vanish, giving $w\in S$. But $t-w\in2\Leech$ would then imply $(t,\Leech^G)\subset2\Z$, contrary to the odd pairing for $t$. Hence $w\in \Leech^G$. Its norm is divisible by the odd content of $\Leech^G$, which must be $1$ or $3$.
\end{proof}

For a Leech coinvariant lattice $S$ and $x\in S^\vee$ representing a class of order two in $A_S$, one has $2x\in S$. We study the class $2x+2\Leech$ using Lemma~\ref{lem:modtwo}.

\begin{lemma}\label{lem:half-parity}
Let $G\subset O(\Leech)$ be finite and let $S=S_G(\Leech)$. If $d\in A_S$ has order two and $q_S(d)\equiv\frac32\pmod{2\Z}$, then $\mu_S(d)=\frac32$.
\end{lemma}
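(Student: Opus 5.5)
Take $x\in S^\vee$ representing $d$ with $x^2=\mu_S(d)$. Since $d$ has order two, $2x\in S\subset\Leech$. Because $q_S(d)\equiv\frac32\pmod{2\Z}$, we have $x^2\equiv\frac32\pmod 2$, hence $(2x)^2=4x^2\equiv6\pmod 8$ and $Q(2x)=\frac{(2x)^2}{2}\bmod 2=1$. In particular $2x\notin2\Leech$. Since $Q(2x)=1$, Lemma~\ref{lem:modtwo} says the shortest representatives of $2x+2\Leech$ are a single pair $\pm w$ of norm $6$. (Norm-$4$ and norm-$8$ classes have $Q=0$.) The goal is $\mu_S(d)=\frac32$, so we need $x^2\leq\frac32$, because $x^2\geq\frac32$ is automatic from the congruence and positivity.

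First, locate $w$ inside $S$, following the argument of Lemma~\ref{lem:order-three-bound}. We have $\pr_S(w)\in 2x+2S^\vee$. By Lemma~\ref{lem:averaging} applied to $(S,G)$, $x\in\Vor(S)$ gives $x/2\in\Vor(S^\vee)$. I would rather use the fact that $G$ preserves the class $2x+2\Leech$, since $2x\in S$ and $G$ acts on $\Leech$. Therefore $G$ permutes its unique shortest pair $\{\pm w\}$. If $G$ fixes $w$, then $w\in\Leech^G$. But $w\equiv 2x\pmod{2\Leech}$ with $2x\in S$, so $\pr_{\Leech^G}(w)\in 2\pr_{\Leech^G}(\Leech)$, which gives $w\in 2(\Leech^G)^\vee$. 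The case $w\in\Leech^G$ would then need ruling out. Instead I would argue directly: averaging shows that either $w\in\Leech^G$ or $w\in S$. In the first case $2x-w\in2\Leech$ projects to $S$ as $2x\in 2S^\vee$, consistent. Projecting to $\Leech^G$ gives $-w\in2(\Leech^G)^\vee$, so $w/2\in(\Leech^G)^\vee$ and $(w/2)^2=\frac32$. This is not immediately contradictory, so the cleaner route is the following. Consider $x+\Leech^G$-type arguments: pick $t\in(\Leech^G)^\vee$ glued to $x$, so that $u=x+t\in\Leech$. Then $2u=2x+2t\in 2\Leech$, and hence $2t\equiv -2x\equiv w\pmod{2\Leech}$.

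I will organize the proof around the dichotomy $w\in S$ or $w\in\Leech^G$. If $w\in S$, primitivity of $S$ gives $w-2x\in2S$, so $w/2\in x+S$. Then $\mu_S(d)\leq (w/2)^2=\frac32$, and we are done. If $w\in\Leech^G$, then $2x\in w+2\Leech$. Projecting to $S$ shows $2x\in2\pr_S(\Leech)=2S^\vee$, which is trivially true, so we must extract more. Here I would use the norm comparison: $\pr_S$ of any element of $2x+2\Leech$ lies in $2x+2S^\vee$, and by Corollary~\ref{cor:projection} (applied to the class $2\cdot$ something) it has norm at least $\mu$ of that class. The key is that $2x+2\Leech$ contains $2x$ itself, of norm $4x^2$, and the minimum norm in the coset is $6$, giving only $x^2\geq\frac32$. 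So in this case the plan is to show $w\in\Leech^G$ is impossible. The difficulty is that $w\equiv2x$ with $w\in\Leech^G$ and $2x\in S$ would force $w-2x\in2\Leech$ to have projections $w\in 2(\Leech^G)^\vee$ and $-2x\in 2S^\vee$. This is consistent with gluing only if $\frac{w}{2}$ and $x$ glue, meaning $\frac w2-x\in\Leech$ or $\frac w2 + x\in\Leech$. I would then compute $(\frac w2+x)^2=\frac32+x^2$ and compare it with the minimal norm $4$ in $\Leech$. Since $G$ acts as $-1$ on nothing in particular, this likely requires showing the vector $\frac w2 + x$ together with $\frac w2 - x$ (both in $\Leech$) differ by $2x$... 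In fact $\frac w2\pm x\in\Leech$ gives $w\in\Leech$ and the vectors $\frac w2\pm x$ lie in the class of $\frac w2+x$ mod $2\Leech$ up to $2x$. The norm $\frac32+x^2$ must be even and at least $4$, so $x^2\geq\frac52$. Then $(\frac w2+x)+(\frac w2-x)=w$ of norm $6$ expresses $w$ as a sum, and $(\frac w2+x)-(\frac w2 - x)=2x$. I expect this case, excluding $w\in\Leech^G$ or deriving $x^2\leq\frac32$ from it, to be the main obstacle. My first attempt would use the uniqueness of the shortest pair in the norm-$6$ class against the alternative representatives $w-2(\frac w2\pm x)=\mp 2x$, to see which norms are forced.
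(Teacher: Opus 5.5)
Your setup and the case $w\in S$ are correct and follow the paper's proof. $Q(2x)=1$ gives the unique norm-$6$ pair $\pm w$ in $2x+2\Leech$. The group $G$ preserves $\{\pm w\}$, so either some $g$ negates $w$ and averaging puts $w\in S$, or $w\in\Leech^G$. In the first case primitivity gives $\frac w2\in x+S$, hence $\mu_S(d)=\frac32$.

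Your justification for the $G$-invariance of $2x+2\Leech$ is incomplete. Knowing ``$2x\in S$ and $G$ acts on $\Leech$'' does not give $g(2x)-2x\in2\Leech$. For that you need $gx-x\in S$, i.e.\ that $G$ acts trivially on $A_S$. This holds because $G$ fixes $\Leech^G$ pointwise.

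The genuine gap is that you leave the case $w\in\Leech^G$ open and call it the main obstacle. In fact it is ruled out in one line, using a vector you already wrote down. Since $x\in S_\R$ and $w\in\Leech^G$, we have $(x,w)=0$. The congruence $w-2x\in2\Leech$ gives $x-\frac w2\in\Leech$, with $(x-\frac w2)^2=x^2+\frac32$. But $x^2\equiv\frac32\pmod{2\Z}$, so $x^2+\frac32\equiv3\equiv1\pmod{2\Z}$ is odd, contradicting the evenness of $\Leech$.

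Your inference ``the norm must be even and at least $4$, so $x^2\geq\frac52$'' throws this away. No value of $x^2$ in the class $\frac32+2\Z$ makes $x^2+\frac32$ even, so the case is impossible, not merely constrained. Equivalently, your remark that $\frac w2\in(\Leech^G)^\vee$ with $(\frac w2)^2=\frac32$ ``is not immediately contradictory'' is mistaken. The class $\frac w2+\Leech^G$ is the glue partner of $d$, so the gluing anti-isometry $q_{\Leech^G}\circ\gamma=-q_S$ would require $\frac32\equiv-\frac32\pmod{2\Z}$, which fails.

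With this parity observation, your detours through Voronoi cells, Lemma~\ref{lem:averaging}, and Corollary~\ref{cor:projection} are unnecessary.
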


\begin{proof}
Choose $x\in S^\vee$ with $[x]=d$, so $2x\in S$. We have $x^2\equiv\frac32\pmod{2\Z}$, which gives $Q(2x)=2x^2\equiv1\pmod2$. Hence the class $2x+2\Leech$ contains a unique pair $\pm w$ of norm $6$. The group preserves this pair because $2gx-2x=2(gx-x)\in2S$. If some element of $G$ sends $w$ to $-w$, averaging gives $w\in S$. Primitivity then gives $2x-w\in2S$, so $\frac{w}{2}$ represents $d$. Its norm is $\frac32$. Every representative $y$ of $d$ satisfies $y^2\equiv q_S(d)=\frac32\pmod{2\Z}$, so positive definiteness gives $y^2\geq\frac32$. Hence $\mu_S(d)=\frac32$.

Otherwise $G$ fixes $w$, so $w\in\Leech^G$ and $x\perp w$. The integral Leech vector $x-\frac{w}{2}$ would have norm $x^2+\frac32\equiv1\pmod{2\Z}$, contrary to evenness.
\end{proof}

The next bound again starts with the unique norm-$6$ pair in a mod-two class. In the remaining case, Corollary~\ref{cor:projection} and Lemma~\ref{lem:modtwo} give a norm-$8$ frame.

\begin{lemma}\label{lem:two-exception}
Let $G\subset O(\Leech)$ be finite and let $S=S_G(\Leech)$. If $d\in A_S$ has order two and $q_S(d)=\frac12$, then $\mu_S(d)\leq\frac92$.
\end{lemma}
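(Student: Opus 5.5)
We follow the pattern of Lemma~\ref{lem:half-parity} and then, in the remaining case, use the norm-$8$ frame given by Lemma~\ref{lem:modtwo}. Choose $x\in S^\vee$ representing $d$ with $x^2=\mu_S(d)$. Since $2x\in S$ and $x^2\equiv\frac12\pmod{2\Z}$, we get $Q(2x)=2x^2\equiv1\pmod2$. So the class $2x+2\Leech$ contains a unique pair $\pm w$ of norm $6$, and $G$ preserves it, exactly as in Lemma~\ref{lem:half-parity}.

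\emph{Step 1: the pair is fixed.} Suppose some $g\in G$ sends $w$ to $-w$. Then averaging gives $w\in S$, and primitivity gives $2x-w\in2S$. Hence $\frac w2$ represents $d$ with norm $\frac32\not\equiv\frac12\pmod{2\Z}$, a contradiction. Therefore $w\in\Leech^G$, so $x\perp w$. The vector $u=x-\frac w2$ lies in $\Leech$, because $2x-w\in2\Leech$, and it satisfies $\pr_S(u)=x$ and $(u,w)=-3$.

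\emph{Step 2: reduce to excluding the frame case.} Every representative of $d$ has norm in $\frac12+2\Z$, so it suffices to rule out $x^2\geq\frac{13}2$. Assume $x^2\geq\frac{13}2$. By Corollary~\ref{cor:projection}, every $w'\in u+2\Leech$ satisfies $w'^2\geq\pr_S(w')^2\geq x^2>6$. Hence by Lemma~\ref{lem:modtwo} the class $u+2\Leech$ is of frame type: it contains $w_1,\dots,w_{24}$ with $(w_i,w_j)=8\delta_{ij}$. For each $i$ we have $\pr_S(w_i)^2\geq\frac{13}2$, so $\pr_{\Leech^G}(w_i)^2\leq\frac32$. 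Since $w_i\equiv u\pmod{2\Leech}$ and $w\in\Leech$, we get $(w_i,w)\equiv(u,w)=-3\pmod2$, so each $(w_i,w)$ is odd. Cauchy--Schwarz in $\Leech^G_\R$ gives
\[
(w_i,w)^2\leq\pr_{\Leech^G}(w_i)^2\,w^2\leq9,
\]
so $(w_i,w)^2\in\{1,9\}$. The frame identity $\sum_i(w_i,w)^2=8w^2=48$ then forces exactly three indices with $(w_i,w)^2=9$. For these three, equality holds throughout: $\pr_{\Leech^G}(w_i)=\pm\frac12w$ and $\pr_S(w_i)^2=\frac{13}2$.

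\emph{Step 3: derive a contradiction (main obstacle).} We must exploit these three rigid vectors. For such an $i$, the vector $w_i\mp\frac12w=\pr_S(w_i)$ represents a class $x+2y$ with $y\in S^\vee$. Consider $w_i\mp w\in\Leech$; its norm is $8+6\mp2(w_i,w)=14-6=8$, and it lies in $u\mp w+2\Leech$. This class contains the lift $u\mp w$ of $x$, so Corollary~\ref{cor:projection} applies to it as well. We would then play the frames of the classes $u+2\Leech$ and $u+w+2\Leech$ against each other. A parity count is also available. Summing $(w_i,w)^2$ showed that twenty-one of the $w_i$ have $\pr_{\Leech^G}(w_i)^2<\frac32$, and hence $\pr_S(w_i)^2>\frac{13}2$. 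Combining this with the trace identity $\sum_i\pr_S(w_i)^2=8\rk S$ and with the integrality of $(\pr_S(w_i),x)$ modulo the lattice $2S^\vee$, we hope to force $\rk S$ outside the allowed range.

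Of the two, I would first try the mixed-frame argument. It should force $\pr_{\Leech^G}(w_i)$ onto a line spanned by $w$ for all $i$, contradicting $\sum_i\pr_{\Leech^G}(w_i)^2=8\rk\Leech^G$. I expect this last step to be the genuine difficulty.
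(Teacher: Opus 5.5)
\textbf{There is a genuine gap: Step~3 is missing.} Your Steps~1 and~2 are correct and match the paper's argument. They give the $G$-fixed norm-$6$ vector $w\in\Leech^G$, the lift $u=x-\frac w2\in\Leech$, and the norm-$8$ frame. They also give the odd pairings $(w_i,w)$ with $(w_i,w)^2\leq9$, and exactly three indices with $(w_i,w)^2=9$ and $\pr_{\Leech^G}(w_i)=\pm\frac w2$. But the contradiction itself is never derived, and the routes you sketch would not produce one.

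\begin{itemize}
\item Your claim that the other twenty-one indices satisfy $\pr_{\Leech^G}(w_i)^2<\frac32$ does not follow. From $(w_i,w)^2=1$, Cauchy--Schwarz only gives the lower bound $\pr_{\Leech^G}(w_i)^2\geq\frac16$.
\item Forcing every $\pr_{\Leech^G}(w_i)$ onto $\R w$ would not contradict $\sum_i\pr_{\Leech^G}(w_i)^2=8\rk\Leech^G$. It would only mean $\rk\Leech^G=1$. In that case $\sum_i(w_i,w)^2/6=48/6=8$ holds identically.
\end{itemize}

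The missing idea is short and uses $u$ itself as a frame vector. Since $u$ lies in $u+2\Leech$ with $u^2=8$, and the shortest vectors of this class are exactly the $\pm w_i$, you may take $w_1=u$. Because $(u,w)=-3$, the vector $u$ is one of the three rigid vectors, with $\pr_{\Leech^G}(u)=-\frac w2$. Change signs so that another rigid vector $w_j$, $j\neq1$, also satisfies $(w_j,w)=-3$, hence $\pr_{\Leech^G}(w_j)=-\frac w2$. Put $z=\frac12(u-w_j)$.
\begin{itemize}
\item $z\in\Leech$, since $u\equiv w_j\pmod{2\Leech}$.
\item $z$ has zero projection to $\Leech^G_\R$, so $z\in\Leech\cap S_\R=S$ by primitivity.
\item Frame orthogonality gives $z^2=4$ and $(x,z)=(u,z)=4$.
\end{itemize}
Hence $x-z\in x+S$ has norm $x^2-4<x^2$, contradicting the minimality of $x$. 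This is how the paper finishes the proof.
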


\begin{proof}
Suppose $\mu_S(d)>\frac92$. Lemma~\ref{lem:leech-bounds} gives $\mu_S(d)\leq8$, since $\rk S\leq24$, and the congruence $\mu_S(d)\equiv\frac12\pmod{2\Z}$ forces $\mu_S(d)=\frac{13}{2}$. Choose $x\in S^\vee$ with $[x]=d$ and $x^2=\mu_S(d)=\frac{13}{2}$. The vector $2x\in S$ has norm $26$, so $Q(2x)=13\equiv1\pmod2$ and its mod-two class has a unique short pair $\pm w$ of norm $6$. As $2gx-2x=2(gx-x)\in2S$, the group preserves the pair $\{\pm w\}$. If some element of $G$ sends $w$ to $-w$, averaging gives $w\in S$ and primitivity gives $2x-w\in2S$. Then $\frac{w}{2}$ represents $d$ with norm $\frac32$, a contradiction. Thus $G$ fixes $w$, so $w\in\Leech^G$.

Since $2x-w\in2\Leech$, the vector $\lambda_1=x-\frac{w}{2}$ belongs to $\Leech$ and has norm $8$. Since $w\in\Leech^G$, one has $\pr_S(\lambda_1)=x$. Corollary~\ref{cor:projection} shows that every vector in $\lambda_1+2\Leech$ has norm at least $\frac{13}{2}$. Thus this class has no representative of norm less than $8$. Lemma~\ref{lem:modtwo} shows that its shortest vectors form a frame. Extend $\lambda_1$ to orthogonal frame vectors $\lambda_1,\ldots,\lambda_{24}$ of norm $8$. Let $t_i=\pr_{\Leech^G}(\lambda_i)$. Then $\pr_S(\lambda_i)^2\geq\frac{13}{2}$ and $t_i^2=8-\pr_S(\lambda_i)^2\leq\frac32$. The integers $b_i=(\lambda_i,w)$ are odd, since $(\lambda_1,w)=-3$ and $\lambda_i\equiv\lambda_1\bmod2\Leech$. By Cauchy's inequality, $b_i^2=(t_i,w)^2\leq6t_i^2\leq9$. The frame identity gives $\sum_i b_i^2=8w^2=48$.

Since each $b_i$ is odd and $|b_i|\leq3$, the equality $\sum_i b_i^2=48$ implies that exactly three indices $i$ satisfy $|b_i|=3$. Changing the signs of these frame vectors if necessary, we may assume $b_i=-3$ for each of them. Equality in Cauchy's inequality forces $t_i=-\frac{w}{2}$. Choose $j\ne1$ with $b_j=-3$. The congruence $\lambda_1\equiv\lambda_j\pmod{2\Leech}$ and the equality $t_1=t_j$ imply that $z=(\lambda_1-\lambda_j)/2$ belongs to $S$. Frame orthogonality gives $z^2=4$ and $(x,z)=(\lambda_1,z)=4$. Thus $x-z\in x+S$ has norm $\frac52$, contradicting minimality.
\end{proof}

\subsection{Possible values of \texorpdfstring{$n$}{n}}

Combining Proposition~\ref{prop:depth} with the Leech estimates above, we obtain the following restriction on $n$ when $\alpha(S)=0$:

\begin{proposition}\label{prop:dimension}
Let $G\subset O(\Leech)$ be a finite subgroup, let $S=S_G(\Leech)$, and suppose $\rk S\leq20$ and $\alpha(S)=0$. Let $v\in\Mukai$ be primitive with $v^2=2n-2$, where $n\geq2$, and let $M\simeq S(-1)$ be a primitive sublattice of $v^\perp$ containing no numerical wall relative to $v$. Let $R=M^\perp_{\Mukai}$ and $a=\divi_R(v)$. Then $n\in\{2,3,4,5,7\}$. For $n=4,7$ one has $a=\cont(\Leech^G)=3$.
\end{proposition}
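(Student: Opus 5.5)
The plan is to play the lower bound of Proposition~\ref{prop:depth} against the upper bounds of Lemmas~\ref{lem:leech-bounds}--\ref{lem:two-exception}, applied to a suitable multiple $kc$ of the glue class. By Lemma~\ref{lem:gluing-class}, $\alpha(S)=0$ gives $\cont(\Leech^G)>1$ and $a>1$, with $\cont(\Leech^G)\mid a\mid 2n-2$. Proposition~\ref{prop:geometry} lets us apply Proposition~\ref{prop:depth}, so
\[
\mu_S(kc)\geq4+\frac{k^2(2n-2)}{a^2}\qquad\text{for }1\leq k\leq\lfloor a/2\rfloor .
\]
Since $\rk S\leq20$, Lemma~\ref{lem:leech-bounds} gives the uniform upper bound $\mu_S\leq20/3$. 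I would then split according to the parity of $\cont(\Leech^G)$.

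\emph{Even content.} Here $a$ is even. Take $k=a/2$, so $kc$ has order two and $q_S(kc)\equiv\frac{2n-2}{4}\pmod{2\Z}$. The two bounds give $4+\frac{2n-2}{4}\leq\frac{20}{3}$, hence $2n-2\leq 10$. With $a$ even dividing $2n-2$, this leaves $n\in\{2,3,4,5,6\}$.
\begin{itemize}
\item For $n=4$ we get $q_S(kc)\equiv\frac32$, so Lemma~\ref{lem:half-parity} forces $\mu_S(kc)=\frac32<4$, a contradiction.
\item For $n=6$ we get $q_S(kc)\equiv\frac52\equiv\frac12$. Lemma~\ref{lem:two-exception} then gives $\mu_S(kc)\leq\frac92$, contradicting the lower bound $4+\frac{10}{4}=\frac{13}{2}$.
\end{itemize}
So in the even case $n\in\{2,3,5\}$.

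\emph{Odd content.} Lemma~\ref{lem:odd-content} forces $\cont(\Leech^G)=3$, so $3\mid a$. Lemma~\ref{lem:leech-bounds} now gives $\mu_S\leq6$ for every class, and Lemma~\ref{lem:order-three-bound} gives $\mu_S<6$ on classes of order three.
\begin{itemize}
\item If $a=3$, take $k=1$. Then $4+\frac{2n-2}{9}<6$, so $2n-2<18$. Together with $3\mid 2n-2$ this gives $n\in\{4,7\}$.
\item If $a>3$, write $a=3m$ with $m\geq2$ and choose $k$ with $k/a$ close to $\frac12$. For $a$ even, take $k=a/2$. Then $kc$ has order two and $\mu_S(kc)\geq4+\frac{2n-2}{4}$. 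Since $6\mid 2n-2$, the bound $\mu_S\leq6$ leaves only $2n-2=6$. In that case $q_S(kc)\equiv\frac32$, and Lemma~\ref{lem:half-parity} gives a contradiction.
\item For $a$ odd with $a\geq9$, take $k=(a-1)/2$. Then $k/a\geq\frac49$, so $4+\frac{16(2n-2)}{81}\leq6$. This forces $2n-2\leq10$, which is impossible since $9\mid a\mid 2n-2$.
\end{itemize}
So in the odd case $n\in\{4,7\}$ and $a=3=\cont(\Leech^G)$. This also gives the last assertion: $n=4,7$ cannot occur in the even-content case, so there $a=\cont(\Leech^G)=3$.

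The main obstacle is the bookkeeping of the cases with larger $a$, in particular checking that every $a$ divisible by the content is covered by one of the choices of $k$. The key step is to use the order-two classes together with Lemmas~\ref{lem:half-parity} and~\ref{lem:two-exception} to kill $n=4$ and $n=6$ in the even case. The delicate point there is verifying that the quadratic value $q_S(kc)=\frac{k^2(2n-2)}{a^2}$ lands exactly in the residues $\frac32$ or $\frac12$ needed by those lemmas. I would first tabulate the pairs $(n,a)$ surviving the crude bound $\frac{20}{3}$ and then dispatch each pair with the appropriate lemma.
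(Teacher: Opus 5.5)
Your argument is correct and is essentially the paper's: the paper splits on the parity of $a$ rather than of $\cont(\Leech^G)$, which folds your odd-content, even-$a$ subcase into its even-$a$ case. For odd $a$ it uses $k=\frac{a-1}{2}$ with $n-1\geq a$ to get $(a-1)^2\leq 4a$, which matches your $a\geq 9$ estimate. One small slip: in your last subcase $9\mid a$ need not hold (e.g.\ $a=15$), but $9\leq a$ together with $a\mid 2n-2$ and $2n-2$ even already gives $2n-2\geq 18>10$.
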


\begin{proof}
Lemma~\ref{lem:gluing-class} gives $\cont(\Leech^G)>1$, $\cont(\Leech^G)\mid a$, and $a\mid(2n-2)$; let $c\in A_S$ be the gluing class of order $a$.

If $a$ is even, the class $d=\frac{a}{2}c$ has order two. Proposition~\ref{prop:depth} and Lemma~\ref{lem:leech-bounds} give $4+\frac{n-1}{2}\leq\mu_S(d)\leq\frac{20}{3}$. Lemma~\ref{lem:gluing-class}\eqref{eq:glueclass} also gives $\mu_S(d)\equiv\frac{n-1}{2}\pmod{2\Z}$. Thus $n\leq6$. If $n=4$, then $q_S(d)=\frac32$, so Lemma~\ref{lem:half-parity} gives $\mu_S(d)=\frac32$, contradicting the lower bound $\frac{11}{2}$. If $n=6$, then $q_S(d)=\frac12$, so Lemma~\ref{lem:two-exception} gives $\mu_S(d)\leq\frac92$, contradicting the lower bound $\frac{13}{2}$. Hence $n=2,3,5$.

If $a$ is odd, then $\cont(\Leech^G)$ is odd, so Lemma~\ref{lem:odd-content} gives $\cont(\Leech^G)=3$. Hence $3\mid a$, and $a\mid(n-1)$. Applying Proposition~\ref{prop:depth} with $k=\frac{a-1}{2}$ and Lemma~\ref{lem:leech-bounds} gives $4+\frac{(n-1)(a-1)^2}{2a^2}\leq6$. Since $n-1\geq a$, this implies $(a-1)^2\leq4a$; together with $3\mid a$ and oddness, it forces $a=3$. The class $c$ has order three and $\cont(\Leech^G)=3$ is odd, so Lemma~\ref{lem:order-three-bound} gives $4+\frac{2n-2}{9}\leq\mu_S(c)<6$. Thus $n<10$ and $3\mid(n-1)$, so $n=4,7$ and $a=\cont(\Leech^G)=3$.
\end{proof}

\section{The Remaining Cases}\label{sec:remaining}

Let $G$ be a finite group acting by symplectic automorphisms on a manifold $X$ of $K3^{[n]}$ type, with $n\geq2$. As in \S\ref{sec:coinvariant-wall}, put $M=S_G(X)$ and let $S=S_G(\Leech)\simeq M(-1)$ be its associated Leech coinvariant lattice. By Proposition~\ref{prop:geometry}, $M$ contains no numerical wall, so Propositions~\ref{prop:depth} and~\ref{prop:dimension} apply.

\subsection{The cases \texorpdfstring{$n=2,3$}{n=2,3}}\label{sec:low}

For $n=2,3$, we use the classifications of symplectic automorphism groups in
\cite{HoehnMasonK3two,BilliMullerWawak}.

\begin{proposition}\label{prop:low}
Let $G$ be a finite group acting by symplectic automorphisms on a manifold $X$ of $K3^{[n]}$ type, and let $S$ be its associated Leech coinvariant lattice. If $n=2$ or $3$, then $\alpha(S)>0$.
\end{proposition}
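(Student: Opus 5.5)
The plan is to reduce to the published classifications and check the strict inequality case by case. I would not rederive anything from the minimum-norm bounds. For $n=2$, I would invoke H\"ohn--Mason \cite[Theorems~8.6--8.7]{HoehnMasonK3two}. They classify the finite symplectic groups on manifolds of $K3^{[2]}$ type and prove that the associated Leech coinvariant lattices satisfy $\rk S+\ell(A_S)<24$. The only point to check is that their Leech coinvariant lattice agrees with ours. This holds because both are obtained from $S_G(X)(-1)$ by the primitive embedding of Lemma~\ref{lem:leech-embedding}, so they are isometric as $G$-lattices.

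For $n=3$, I would use Billi--Muller--Wawak \cite[Theorem~2.4]{BilliMullerWawak}. They classify finite stable symplectic birational actions on $K3^{[3]}$-type manifolds, that is, the pairs $(G,S_G(X))$ up to the relevant equivalence. A regular symplectic action is in particular birational. Moreover, $S_G(X)$ contains no numerical wall by Proposition~\ref{prop:geometry}. I would also verify that our $G$ is stable in their sense, meaning it equals the full group of birational symplectic automorphisms acting trivially on $H^2(X)^G$. If it is not, I would pass to that saturation. This only changes $G$, not $S_G(X)$, because the saturation has the same fixed lattice.

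It then suffices to scan their list for entries with $\rk S+\ell(A_S)=24$, equivalently $\alpha(S)=0$. For each such entry, I would exhibit a numerical wall in $M=S_G(X)$ relative to $v$, so that Proposition~\ref{prop:geometry} excludes it for regular actions. Here $v^2=4$. By Lemma~\ref{lem:gluing-class}, $a\mid 4$ and $a>1$.

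For $a=2$, Proposition~\ref{prop:depth} requires $\mu_S(c)\geq 5$. But $q_S(c)\equiv 1\pmod{2\Z}$ and Lemma~\ref{lem:leech-bounds} gives $\mu_S(c)\leq 20/3$, so we would need $\mu_S(c)=5$. For $a=4$, the class $2c$ has order two with $q_S(2c)\equiv 1$, and we would again need $\mu_S(2c)=5$. So in the $\alpha(S)=0$ cases, the task is to show that the relevant order-two class, or the class $c$, has a representative of norm at most $3$. Such a representative yields $\rho^2\geq -2$ in the construction of Proposition~\ref{prop:depth}, and hence a wall. I would read off these discriminant minima from the Gram matrices in the Billi--Muller--Wawak list, or equivalently from the H\"ohn--Mason tables of $\Leech^G$.

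The main obstacle is the $n=3$ bookkeeping. One must make sure the birational classification covers every regular action, including non-stable groups, via saturation. One must also confirm, entry by entry, that every $\alpha(S)=0$ entry carries a numerical wall of the form in Definition~\ref{def:numerical-wall}. Since the authors only say the result follows ``together with the absence of numerical walls'', I expect this to be a finite check against their tables rather than a conceptual argument.
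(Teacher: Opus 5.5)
Your route is the paper's. For $n=2$ you cite H\"ohn--Mason. For $n=3$ you reduce to the Billi--Muller--Wawak list, using the saturation (which does not change the embedded lattice $M\subset H$), and you rule out each $\alpha(S)=0$ entry by exhibiting a numerical wall. Your reduction of the wall search is also correct: you need a representative of norm at most $3$ in the order-two class glued to $v/2$. This is exactly what the paper certifies via the $\#(-12,-36)$ column of their Table~A1. That column supplies a vector $D\in M$ with $D^2=-12$ and $\divi_H(D)=2$. Then $y=D/2\in M^\vee$ has norm $-3$ and represents that class, and $\rho=\frac{v+D}{2}\in\Mukai$ satisfies $\rho^2=-2$ and $(\rho,v)=2$.

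One step is wrong as written. In Billi--Muller--Wawak, \emph{stable} is not the condition you describe; that condition is saturation. Stable means that $G$ acts trivially on $A_H\cong\Z/4$, where $H=H^2(X,\Z)$, i.e.\ $G$ lies in the stable orthogonal group. Their classification covers only such actions. You never check this, so applying their theorem is not yet justified. The paper obtains it from Fu--Wang--Zheng, Theorem~1.1. It also follows directly from the trivial action on $A_M$ recorded in \S\ref{sec:coinvariant-wall}: for $x\in H^\vee$ one has $\pr_M(x)\in M^\vee$, and $gx-x=g\,\pr_M(x)-\pr_M(x)\in M$. Saturation is then a separate step, which you handle correctly.

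A smaller point: the class glued to $v/2$ depends on the embedding $M\subset L_3$, which the letters in the BMW Ids record. So the H\"ohn--Mason tables of $\Leech^G$ are not an equivalent source for this check, unless you bound $\mu_S(d)$ for every order-two class $d$ with $q_S(d)\equiv1$.
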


\begin{proof}
For $n=2$, this is \cite[Theorem~8.6]{HoehnMasonK3two}.

For $n=3$, suppose for contradiction that $\alpha(S)=0$, and let $H=H^2(X,\Z)$ and $M=S_G(X)$. By \cite[Theorem~1.1]{FuWangZhengStability}, $G$ acts trivially on $A_H$. Thus the action is stable in the sense of \cite[\S~1.1]{BilliMullerWawak}. Passing to the stable saturation preserves the fixed lattice and hence the embedded coinvariant lattice $M\subset H$ \cite[\S~1.1]{BilliMullerWawak}. The lattice classification in \cite[\S~2, Theorem~2.4]{BilliMullerWawak} therefore identifies this embedded lattice, under a marking $H\simeq L_3$, with an entry of \cite[Appendix~A, Table~A1]{BilliMullerWawak}.

The numerical part of each Id label corresponds to a row of \cite[Table~1]{HoehnMason}, while the letters distinguish different primitive embeddings into $L_3$. For every listed entry whose underlying H\"ohn--Mason Id has $\alpha=0$, the first number in the $\#(-12,-36)$ column is positive. As explained in \cite[Appendix~A]{BilliMullerWawak}, this number counts classes in the embedded coinvariant lattice of norm $-12$ and divisibility $2$ in the ambient lattice $L_3$. Since $M\simeq S(-1)$ and $\alpha(S)=0$, it follows that $M$ contains a vector $D$ with $D^2=-12$ and $\divi_H(D)=2$.

The vector $D$ is primitive: otherwise, its divisibility would force $D=2E$ with $E^2=-3$, contradicting the evenness of $H$. In the Mukai embedding $H=v^\perp\subset\Mukai$, we have $v^2=4$ and $A_H\simeq A_{\Z v}\simeq\Z/4$. Thus $D/2+H$ and $v/2+\Z v$ are the unique elements of order two in their respective discriminant groups. Unimodular gluing gives $\rho=\frac{v+D}{2}\in\Mukai$ with $\rho^2=-2$ and $(\rho,v)=2$. Hence $D$ is a numerical wall by Definition~\ref{def:numerical-wall}, contradicting Proposition~\ref{prop:geometry}.
\end{proof}

\subsection{The cases \texorpdfstring{$n=4,7$}{n=4,7}}\label{sec:ternary}

Assume $n=4$ or $7$ and $\alpha(S)=0$. Proposition~\ref{prop:dimension} gives $\cont(\Leech^G)=3$, so the $3$-primary part of its discriminant group has length equal to its rank, while the $2$-primary part has smaller length. Together with $\rk(\Leech^G)\geq4$, these conditions leave only rows $22$, $114$, $138$, and $144$ of \cite[Table~1]{HoehnMason}; see the proof of Lemma~\ref{lem:ternary-reduction} below.

We begin with the Leech coinvariant lattice associated with row $22$. In the Niemeier $E_8^{\oplus3}$ model for this row \cite[\S~4]{HoehnMason}, $S_3$ permutes the three factors. Its coinvariant lattice consists of the sum-zero triples, so it is isometric to $A_2\otimes E_8$. We first bound its discriminant minima; this bound will exclude row $22$ and, through the inclusion relations in the H\"ohn--Mason classification, rows $138$ and $144$.

\begin{lemma}\label{lem:a2e8-minima}
Every class $d\in A_{A_2\otimes E_8}$ has $\mu_{A_2\otimes E_8}(d)\leq\frac{10}{3}$.
\end{lemma}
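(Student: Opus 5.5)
We work directly with the tensor-product structure. Here $A_2\otimes E_8$ is the lattice of triples $(x_1,x_2,x_3)\in E_8^{\oplus3}$ with $x_1+x_2+x_3=0$, taken inside the real span of sum-zero triples. Since $E_8$ is unimodular, the dual is $(A_2\otimes E_8)^\vee=A_2^\vee\otimes E_8$, and the discriminant group is
\[
A_{A_2\otimes E_8}\simeq A_{A_2}\otimes E_8\simeq E_8/3E_8\simeq(\Z/3)^8 .
\]
Concretely, let $\omega=\frac13(2,-1,-1)$, which is a weight of $A_2$ of norm $\frac23$. Every class is represented by $\omega\otimes e=\frac13(2e,-e,-e)$ for some $e\in E_8$. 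Moreover, $\omega\otimes e$ and $\omega\otimes e'$ represent the same class exactly when $e\equiv e'\pmod{3E_8}$.

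The plan is therefore to bound, for each $e\in E_8$, the minimum norm of $\omega\otimes e+A_2\otimes E_8$ by choosing a good representative. I will use the fact that the norm of $\omega\otimes e$ is $\frac23e^2$. If $e$ has been replaced by a shortest vector $e'$ in its class in $E_8/3E_8$, this gives $\mu(d)\leq\frac23e'^2$. The key input is then the classical description of short representatives of $E_8/3E_8$: every class mod $3E_8$ contains a vector of norm at most $6$. Here is the counting argument. There are $3^8=6561$ classes. The shortest-representative counts are $1$ for the zero class, $240/2$ for norm $2$, $2160/2$ for norm $4$, and $6720/2$ for norm $6$, giving $1+120+1080+3360=4561$. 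That does not cover all classes, so the counting alone leaves norm-$8$ classes.

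So the first step is to determine which classes have no representative of norm $\le 6$. The expected answer is the $2000$ remaining classes, whose shortest vectors have norm $8$. Using only norm $\le 6$ would give $\mu\le 4$, which is not enough for the claimed $\frac{10}{3}$. I therefore will not rely on $\omega\otimes e$ alone. For a class with representative $e$, I will also consider the other representatives of the same $A_2\otimes E_8$-coset, namely general triples
\[
\tfrac13(2e,-e,-e)+(y_1,y_2,y_3),\qquad y_1+y_2+y_3=0 .
\]
Equivalently, these are triples $(z_1,z_2,z_3)$ with $z_i\in\frac13E_8$, $\sum z_i=0$, and $z_1\equiv\frac{2e}{3}$, $z_2\equiv z_3\equiv-\frac e3$ modulo $E_8$. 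Writing $z_i=u_i/3$, the conditions become: $u_i\in E_8$, $u_1\equiv 2e$, $u_2\equiv u_3\equiv -e\pmod{3E_8}$, and $u_1+u_2+u_3=0$. The norm of the triple is $\frac19\sum u_i^2$. The goal is to show $\min\sum u_i^2\leq30$.

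Writing $f=-e$, the problem becomes: given $f\in E_8$, find $u_2,u_3\in f+3E_8$ such that $u_1=-u_2-u_3$ (which is automatically $\equiv -2f\equiv f$ mod $3$) makes $u_1^2+u_2^2+u_3^2$ small. If $f$ has norm $\le4$, take $u_2=u_3=f$ and $u_1=-2f$, giving a total of $6f^2\leq24$. If $f$ has norm $6$, the same choice gives $36$, so instead I will look for two different representatives $u_2\neq u_3$ of the class with $u_2+u_3$ short. For example, take $u_2=f$ and $u_3=f-3r$ for a root $r$ with $(f,r)=2$, so that $u_3^2=6-12+18=12$. This is not clearly better, so the real work is in choosing the representatives.

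The main obstacle is exactly this: the norm-$6$ and norm-$8$ classes of $E_8/3E_8$. For these, I will use the transitivity of $W(E_8)$ on each type of class, so that only two orbits of $f$ need to be checked, one with shortest norm $6$ and one with shortest norm $8$. For each orbit I will write down an explicit triple $u_1,u_2,u_3$ in coordinates with $\sum u_i^2\le 30$. A natural ansatz is to take the three $u_i$ to be three distinct shortest representatives of the same class that sum to zero, giving $3\cdot 8=24$ or $3\cdot 6=18$. The alternative is one vector of norm $6$ and two of norm $12$, giving exactly $30$, which matches the bound $\frac{10}{3}$ and suggests that this is the extremal case. The final check, via the explicit coordinate computation in each orbit, is that the triples can always be chosen with $\sum u_i^2\le30$.
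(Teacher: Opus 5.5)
Your reformulation is correct. Since $2\equiv-1\pmod 3$, a class $d$ is represented by $\frac13(u_1,u_2,u_3)$ with all $u_i$ in one coset $f+3E_8$ and $u_1+u_2+u_3=0$, so the lemma amounts to finding such a triple with $\sum u_i^2\le30$ in every class. But the proposal stops where the content of the lemma begins. The classes whose shortest representative has norm $6$ or $8$ are left to an ``explicit coordinate computation'' that is never done, and the guidance you give for it is wrong:
\begin{itemize}
\item Since $(f+3y)^2=f^2+6(f,y)+9y^2$, all norms in $f+3E_8$ are congruent to $f^2$ modulo $6$. So the configuration $6+12+12$ cannot occur in a class of minimum norm $8$. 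In a class of minimum norm $6$ it is not optimal, since $6+6+6$ is available there.
\item No three norm-$8$ vectors of a class of minimum norm $8$ sum to zero. Indeed, $u_1^2=u_2^2=(u_1+u_2)^2=8$ forces $(u_1,u_2)=-4$, so $u_1-u_2\in3E_8$ would have norm $24\notin18\Z$. Your ansatz $3\cdot8=24$ is therefore impossible. The true optimum is $8+8+14=30$, so the bound $\frac{10}{3}$ is sharp.
\item Your preliminary count is also wrong, though this is secondary. One has $e\not\equiv-e\pmod{3E_8}$, so each vector of norm at most $4$ is alone in its class, and each norm-$6$ class contains three norm-$6$ vectors. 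The correct tally is $1+240+2160+2240+1920=3^8$.
\end{itemize}
As written, nothing in your argument shows that every class has a representative of norm at most $8$. The paper gets this from the covering radius of $E_8$: $e/3\in\Vor(E_8)$ gives $e^2\le9$.

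The missing idea is a small change to the construction you already tried. Take $u_2=f$ and $u_3=f-3r$ for a root $r$ with $(f,r)=3$ rather than $2$. Then $u_3^2=f^2$, and $u_1=-2f+3r$ has norm $4f^2-18$, so the total is $6f^2-18$, which is $18$ or $30$. Up to relabeling, this is the representative used in the paper. Such a root exists whenever $f$ is shortest in its class and $f^2\in\{6,8\}$. Since $(f/2)^2>1$, the point $f/2$ lies outside $\Vor(E_8)$, which is cut out by $|(x,r)|\le1$ over the roots, so some root has $(f,r)\ge3$. Minimality gives $(f-3r)^2-f^2=18-6(f,r)\ge0$, which forces $(f,r)=3$. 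This handles both remaining cases at once and removes any need for $W(E_8)$-orbit transitivity or coordinate checks.
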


\begin{proof}
Let $K=A_2\otimes E_8$ and realize it as $\{(z_1,z_2,z_3)\in E_8^3:z_1+z_2+z_3=0\}$. The map $e\mapsto[\frac13(2e,-e,-e)]$ identifies $E_8/3E_8$ with $A_K$. We use the standard facts that $\Vor(E_8)$ is defined by $|(x,r)|\leq1$ for all roots $r\in E_8$, and that every $x\in\Vor(E_8)$ has $x^2\leq1$ \cite[Theorem~5 and its corollary, and \S~III-E]{ConwaySloaneVoronoi}.

Choose a shortest representative $e$ of the class in $E_8/3E_8$ corresponding to $d$. Then $e/3\in\Vor(E_8)$, so $e^2\leq9$; evenness gives $e^2\leq8$. If $e^2\leq4$, the representative $\frac13(2e,-e,-e)$ has norm at most $\frac83$.

Otherwise $e^2=6$ or $8$, so $(e/2)^2>1$ and $e/2\notin\Vor(E_8)$. Hence there is a root $r$ with $(e,r)>2$, and integrality gives $(e,r)\geq3$. Minimality of $e$ gives $0\leq(e-3r)^2-e^2=18-6(e,r)$, so $(e,r)=3$. Put $f=e-3r$. Then $f\equiv e\pmod{3E_8}$, $f^2=e^2$, and $(e,f)=e^2-9$. The vector $\frac13(-e,-f,e+f)\in K^\vee$ represents $d$ and has norm $\frac{2e^2}{3}-2\leq\frac{10}{3}$.
\end{proof}

Lemma~\ref{lem:a2e8-minima} and Proposition~\ref{prop:depth} exclude the row-$22$ coinvariant lattice. The inclusion data then reduce the remaining possibilities to one embedding.

\begin{lemma}\label{lem:ternary-reduction}
Let $G\subset O(\Leech)$ be finite, and set $S=S_G(\Leech)$ and $T=\Leech^G$. Suppose $\cont(T)=3$ and, for some primitive $v\in\Mukai$ with $v^2>0$, there is a primitive embedding $S(-1)\hookrightarrow v^\perp$ whose image contains no numerical wall relative to $v$. Then $T$ belongs to the row-$114$ orbit in \cite[Table~1]{HoehnMason}; in particular, $T\simeq D_4(3)$.
\end{lemma}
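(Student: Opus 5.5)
We argue by scanning the H\"ohn--Mason table and then removing the surviving rows one at a time. Since $\cont(T)=3$, Lemma~\ref{lem:content} gives $\ell(A_T)=\rk T$. The $3$-primary part $A_T\otimes\Z_3$ then has length $\rk T$, while the $2$-primary part has length strictly less than $\rk T$ (in fact only the $3$-part can reach the rank, since every Smith factor is divisible by $3$).

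The hypothesis places $S(-1)$ inside $v^\perp$, so $\rk S\leq20$ and hence $\rk T\geq4$. Moreover, any vector of $T$ has norm in $6\Z$, and by Lemma~\ref{lem:odd-content} $T$ contains a norm-$6$ vector.

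The first step goes through \cite[Table~1]{HoehnMason} and keeps only those rows with $\rk T\geq4$ whose Gram data have content exactly $3$. As announced before the lemma, this should leave rows $22$, $114$, $138$ and $144$. Here I would simply cite the table entries: content $3$ is visible from the discriminant group together with the rank, again by Lemma~\ref{lem:content}.

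Next we exclude row $22$. There $S\simeq A_2\otimes E_8$, and Lemma~\ref{lem:a2e8-minima} gives $\mu_S(d)\leq\frac{10}{3}$ for every class $d$. On the other hand, set $a=\divi_R(v)$. Lemma~\ref{lem:gluing-class}, with $\alpha(S)=0$ (automatic, since $\ell(A_T)=\rk T$), gives $3\mid a$ and $a>1$. Proposition~\ref{prop:depth} with $k=1$ then gives $\mu_S(c)\geq4+\frac{2n-2}{a^2}>4$, a contradiction. Note that $v^2=2n-2$ for some $n\geq2$ because $v^2>0$ is even, so Proposition~\ref{prop:depth} applies.

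For rows $138$ and $144$, I would use the inclusion relations in \cite[Supplementary Tables]{HoehnMason}. The aim is to show that for these rows $T$ contains, as a sublattice, a fixed lattice conjugate to row $22$. Equivalently, the corresponding coinvariant lattice $S$ is contained in a conjugate of $A_2\otimes E_8$, with the inclusion respecting the relevant orthogonal projections.

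The transfer of the minimum bound needs care. If $S\subseteq S'=A_2\otimes E_8$ with $S'$ primitive, then projection $\pr_S$ maps $S'^\vee$ onto $S^\vee$ and does not increase norms. Lifting a class $d\in A_S$ to some class of $A_{S'}$ and projecting gives $\mu_S(d)\leq\frac{10}{3}$ for every $d\in A_S$. The same contradiction with Proposition~\ref{prop:depth} then excludes these rows. I expect this step to be the main obstacle. One must verify from the table that the inclusion really goes in this direction, namely $T\supseteq T_{22}$, and that the lift of each $d$ is legitimate. The latter follows from surjectivity of $\Leech\to S^\vee$ factoring through $S'^\vee$.

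Finally, row $114$ remains. There the identification $T\simeq D_4(3)$ is read off from the table: it has rank $4$, $A_T\simeq(\Z/3)^4\oplus(\Z/2)^2$-type data, and content $3$. This finishes the proof.
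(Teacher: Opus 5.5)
Your table scan and your exclusion of row $22$ are fine. Taking $k=1$ in Proposition~\ref{prop:depth} gives $\mu_S(c)>4>\frac{10}{3}$ directly, so you do not even need to show $a=3$.

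The gap is in your step for rows $138$ and $144$. The inclusion you aim for, $T\supseteq T_{22}$ (equivalently $S\subseteq A_2\otimes E_8$), is impossible by rank. For rows $138$ and $144$ one has $\rk T=4$ and $\rk S=20$, whereas $\rk T_{22}=8$ and $\rk(A_2\otimes E_8)=16$. The inclusion in the H\"ohn--Mason supplementary tables goes the other way: $S_{22}\subset S_{138}$ and $S_{22}\subset S_{144}$. An upper bound on the discriminant minima of the smaller lattice $S_{22}$ gives no upper bound for the larger one.

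Your transfer step is also flawed even in the direction you intended. For a primitive $S\subset S'$, the projection $\pr_S$ maps $S'$ into $S^\vee$, not into $S$. So the projection of a coset $x'+S'$ does not lie in a single class of $A_S$ unless $S'=S\oplus S^\perp_{S'}$. The surjectivity of $\Leech\to S^\vee$ supplies lifts but does not repair this.

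The missing idea is to transfer the geometric hypothesis rather than the minimum bound.
\begin{enumerate}
\item $S_{22}$ and $S_{138}$ (resp.\ $S_{144}$) are both primitive in $\Leech$, so $S_{22}$ is primitive in $S_{138}$ (resp.\ $S_{144}$).
\item Hence the given wall-free primitive embedding $S(-1)\hookrightarrow v^\perp$ restricts to a primitive embedding of $S_{22}(-1)$.
\item Its image contains no numerical wall. A primitive $D$ in the restricted image is still primitive in the larger image, and the wall condition depends only on $D$ and $v$.
\item Apply Lemma~\ref{lem:gluing-class} to $S_{22}$, whose fixed lattice has content $3$. This shows that $3$ divides the divisibility of $v$ in the orthogonal complement of the restricted image in $\Mukai$.
\item Your row-$22$ argument (Proposition~\ref{prop:depth} against Lemma~\ref{lem:a2e8-minima}) then applies verbatim to the restricted embedding and excludes rows $138$ and $144$.
\end{enumerate}
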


\begin{proof}
Let $M$ be the image of this embedding, and write $v^2=2n-2$, with $n\geq2$. Since $M$ is negative definite and $v^\perp$ has signature $(3,20)$, we have $\rk S\leq20$ and hence $\rk T\geq4$. Put $R=M^\perp_{\Mukai}$ and $a=\divi_R(v)$. Let $c\in A_S$ correspond to $\frac{v}{a}+R\in A_R$ under unimodular gluing, as in \S\ref{sec:lower}.
Replace $G$ by the full pointwise stabilizer of $T$ to consult the table; this changes neither $T$ nor $S$. Among the entries of \cite[Table~1]{HoehnMason} with $\rk T\geq4$, those whose $3$-primary discriminant group has length $\rk T$ and whose $2$-primary part has smaller length are precisely
\[
\begin{array}{c|c|c}
 \text{row}&\rk T&A_T\\\hline
 22&8&(\Z/3)^8\\
 114&4&(\Z/3)^2\oplus(\Z/6)^2\\
 138&4&(\Z/3)^2\oplus(\Z/9)^2\\
 144&4&(\Z/3)^2\oplus(\Z/6)^2 .
\end{array}
\]
The two length conditions follow from $\cont(T)=3$: a Gram matrix is zero modulo three but not modulo two.

For row $22$, we have $S\simeq A_2\otimes E_8$ and $A_S\simeq(\Z/3)^8$. Lemma~\ref{lem:gluing-class} gives $3\mid a$, while the order of $c\in A_S$ gives $a\mid3$, so $a=3$. Proposition~\ref{prop:depth} then gives $\mu_S(c)\geq4+\frac{2n-2}{9}>\frac{10}{3}$, contradicting Lemma~\ref{lem:a2e8-minima}. Hence row $22$ is excluded. The inclusions in \cite[Supplementary Table~3, rows~138 and~144]{HoehnMason} give $S_{22}\subset S_{138}$ and $S_{22}\subset S_{144}$, where $S_j$ denotes the Leech coinvariant lattice of row $j$.
These containments are primitive because each $S_j$ is primitive in $\Leech$. A primitive embedding of either $S_{138}(-1)$ or $S_{144}(-1)$
into $v^\perp$ whose image contains no numerical wall restricts to a primitive embedding of $S_{22}(-1)$ whose image also contains no numerical wall. Since the fixed lattice of row $22$ has content $3$ and $A_{S_{22}}\simeq(\Z/3)^8$, Lemma~\ref{lem:gluing-class} shows that $v$ has divisibility $3$ in the orthogonal complement of this restricted lattice in $\Mukai$. The preceding row-$22$ argument then excludes rows $138$ and $144$.

Only row $114$ remains.
\end{proof}

The fixed lattice $T$ in row $114$ is isometric to $D_4(3)$ \cite[Supplementary Table~2, row~114]{HoehnMason}. Its normalizer index is $1$ \cite[Table~1, row~114]{HoehnMason}. Thus every isometry of $T$ extends to an isometry of $\Leech$ preserving $T$ and its orthogonal complement.

For the remaining fixed lattice $T=\Leech^G\simeq D_4(3)$, use the standard model $D_4=\{u\in\Z^4:\sum_j u_j\in2\Z\}$, with coordinate vectors $e_1,\ldots,e_4$. The map $u\mapsto u/3$ identifies $D_4/3D_4\simeq\F_3^4$ with the $3$-torsion subgroup of $A_T$, and $q_T(u/3+T)\equiv\frac{\sum_j u_j^2}{3}\pmod{2\Z}$.

The norm-$4$ vectors of $D_4$ are precisely $\pm2e_i$ and $(\pm1,\pm1,\pm1,\pm1)$. They represent all classes of quadratic value $\frac43$ in the $3$-torsion subgroup of $A_T$, which have one or four nonzero coordinates modulo $3$. These vectors form one $O(D_4)$-orbit: signed permutations are transitive within each type, and the reflection $u\mapsto u-\frac{\sum_j u_j}{2}(1,1,1,1)$ sends $2e_1$ to $(1,-1,-1,-1)$.

The following lemma gives the upper bound needed for $n=4$.

\begin{lemma}\label{lem:d4-projection}
Let $T\subset\Leech$ be the fixed lattice in row $114$ of \cite[Table~1]{HoehnMason}, so that $T\simeq D_4(3)$, and let $S=T^\perp_\Leech$. If $d\in A_S$ has order $3$ and $q_S(d)=\frac23$, then $\mu_S(d)\leq\frac{8}{3}$.
\end{lemma}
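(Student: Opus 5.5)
The plan is to show directly that $\mu_S(d)$ cannot be $\frac{14}{3}$, rather than to construct a short representative. This needs only Lemma~\ref{lem:leech-bounds} together with the norm-$6$ argument from the proof of Lemma~\ref{lem:order-three-bound}; the $O(D_4)$-orbit description is not used.

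\emph{Reduction to one value.} By Lemma~\ref{lem:ternary-reduction}, $\cont(T)=3$, which is odd. So Lemma~\ref{lem:leech-bounds} gives $\mu_S(d)\le 6$. Every representative of $d$ has norm $\equiv q_S(d)=\frac23\pmod{2\Z}$, so
\[
\mu_S(d)\in\bigl\{\tfrac23,\tfrac83,\tfrac{14}{3}\bigr\}.
\]
It therefore suffices to rule out $\mu_S(d)=\frac{14}{3}$. Suppose this holds, and choose $x\in S^\vee$ representing $d$ with $x^2=\frac{14}{3}$.

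\emph{The norm-$6$ vector.} Since $d$ has order three, $3x\in S$. Its norm is $42$, so $Q(3x)=21\equiv 1\pmod 2$. By Lemma~\ref{lem:modtwo}, the class $3x+2\Leech$ contains a unique pair $\pm w$ of norm $6$. We have $\pr_S(w)\in 3x+2S^\vee=x+2S^\vee$, so Lemma~\ref{lem:averaging} and the minimality of $x$ give $\pr_S(w)^2\ge x^2=\frac{14}{3}$. Hence $\pr_T(w)^2\le \frac43$.

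On the other hand, $\pr_T(3x)=0$ and $w-3x\in 2\Leech$, so $\pr_T(w)\in 2T^\vee$. Now
\[
T^\vee=D_4(3)^\vee=\{y/3: y\in D_4^\vee\}
\]
with the norm of $y/3$ equal to $y^2/3$, so the minimum of $T^\vee$ is $\frac13$ and the minimum nonzero norm in $2T^\vee$ is $\frac43$. This leaves two cases.

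\emph{Case $\pr_T(w)\ne 0$.} Then $\pr_T(w)^2=\frac43$. Each $g\in G$ satisfies $2gx-2x\in 2S$ as in Lemma~\ref{lem:half-parity}, so $g$ preserves $\{\pm w\}$. Since $G$ fixes $T$ pointwise, $\pr_T(gw)=\pr_T(w)\neq0$, which forces $gw=w$. Thus $w\in\Leech^G=T$, so $\pr_S(w)=0$, contradicting $\pr_S(w)^2\ge\frac{14}{3}$.

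\emph{Case $\pr_T(w)=0$.} Then $w\in S$, and primitivity of $S$ gives $w-3x\in 2S$. Choose the sign of $w$ so that $(w,x)\ge0$. Then
\[
\frac{w-x}{2}=x+\frac{w-3x}{2}\in x+S,
\]
and its norm is
\[
\frac{6+\frac{14}{3}-2(w,x)}{4}\le\frac83<\frac{14}{3}.
\]
This contradicts the minimality of $x$.

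Both cases are contradictory, so $\mu_S(d)\le\frac83$.

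The step I expect to need the most care is the discreteness input: showing that $\pr_T(w)\in 2T^\vee$ and that the nonzero minimum there is exactly $\frac43$, so that the only possibilities are $0$ or equality. This is what makes the dichotomy work. I would verify it explicitly from the dual of $D_4$ in the standard model, where $D_4^\vee$ has minimum $1$.
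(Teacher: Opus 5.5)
There is a genuine gap in the case $\pr_T(w)\neq0$. You claim every $g\in G$ preserves $\{\pm w\}$ by analogy with Lemma~\ref{lem:half-parity}, but the analogy does not carry over. There the relevant Leech vector is $2x$, and $g(2x)-2x=2(gx-x)\in2S$ holds automatically. Here $2x\notin\Leech$, the relevant class is $3x+2\Leech$, and $g(3x)-3x=3(gx-x)$. Since $gx-x\in S$ and $S$ is primitive, $g$ preserves this class if and only if $gx-x\in2S$.

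This fails for some $g$, whichever minimal $x$ you take. Suppose $gx-x\in2S$. Then $\tfrac12(x+gx)\in x+S$ has norm $\tfrac12\bigl(x^2+(x,gx)\bigr)$. Minimality of $x$ forces $(x,gx)\geq x^2$, hence $gx=x$ by Cauchy--Schwarz. Since $x\neq0$ and $S_\R^G=0$, some $g$ moves $x$. That $g$ sends $w$ to the norm-$6$ vector of a different mod-two class, so $gw\neq\pm w$, although $\pr_T(gw)=\pr_T(w)$. Hence the configuration $\pr_T(w)^2=\frac43$, $\pr_S(w)^2=\frac{14}{3}$ is not ruled out. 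This is exactly the slack that the norm-$6$ argument of Lemma~\ref{lem:order-three-bound} cannot absorb once $x^2=\frac{14}{3}$ rather than $6$: the budget leaves norm $\frac43$ for $\pr_T(w)$, which is precisely the nonzero minimum of $2T^\vee$.

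What is missing is specific information about how this particular $T$ sits in $\Leech$. The paper supplies it in two steps. First, it uses the H\"ohn--Mason coordinates to exhibit a norm-$4$ Leech vector $\lambda$ with $\pr_T(\lambda)^2=\frac43$. Then $\pr_S(\lambda)$ has norm $\frac83$ and represents an order-$3$ class of value $\frac23$. Second, the order-$3$ classes of value $\frac43$ in $A_T$ form one $O(T)$-orbit, and normalizer index $1$ lets these isometries extend to $\Leech$ preserving $T$ and $S$. This transports the bound to every order-$3$ class of value $\frac23$ in $A_S$. So the orbit description you set aside is exactly the ingredient your argument lacks.

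A minor point: $\cont(T)=3$ follows directly from $T\simeq D_4(3)$. Lemma~\ref{lem:ternary-reduction} goes in the opposite direction.
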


\begin{proof}
Let $t_1,\ldots,t_4$ be the fixed basis for row $114$ in entry \path{lattices[4,16][2]} of the supplement \path{lattices.txt} to \cite{HoehnMason}. The published ambient coordinates use scale $1/\sqrt8$. The basis $(\beta_1,\beta_2,\beta_3,\beta_4)=(-t_2,-t_1,t_4,t_1+t_3)$ is a simple-root basis of $D_4(3)$, with $\beta_2$ the central root. The coordinates needed below are
\[
 \begin{array}{c|rrrr}
 &\beta_1&\beta_2&\beta_3&\beta_4\\\hline
 \text{coordinate }1\text{ of }\sqrt8\,\beta_i&2&0&0&-1\\
 \text{coordinate }11\text{ of }\sqrt8\,\beta_i&-2&0&0&-1 .
 \end{array}
\]
Take the standard minimal Leech vector $\lambda=\frac{4E_1-4E_{11}}{\sqrt8}$, where $E_1,\ldots,E_{24}$ are the ambient orthonormal coordinate vectors. Then $\lambda^2=4$, and the table gives $((\lambda,\beta_i))_i=(2,0,0,0)$. The Gram matrix of the root basis gives $t=\pr_T(\lambda)=\frac{2\beta_1+2\beta_2+\beta_3+\beta_4}{3}$ of norm $\frac43$, so $\pr_S(\lambda)=\lambda-t$ has norm $\frac83$.
Since $3t\in T$ and $t\notin T$, its class has order $3$. Under unimodular gluing, $t+T\in A_T$ corresponds to $(\lambda-t)+S\in A_S$, and their quadratic values are opposite modulo $2\Z$.

The order-$3$ classes of value $\frac43$ in $A_T$ form one $O(T)$-orbit, as noted above. The normalizer index $1$ for row $114$ implies that these isometries extend to $\Leech$ preserving $T$ and $S$. Unimodular gluing therefore gives the bound for every order-$3$ class of value $\frac23$ in $A_S$.
\end{proof}

Lemma~\ref{lem:d4-projection} and Proposition~\ref{prop:depth} exclude $n=4$. For $n=7$, we use the rank-$4$ Mukai complement.

\begin{proposition}\label{prop:ternary}
Let $G$ be a finite group acting by symplectic automorphisms on a manifold $X$ of $K3^{[n]}$ type, and let $S$ be its associated Leech coinvariant lattice. If $n=4$ or $7$, then $\alpha(S)>0$.
\end{proposition}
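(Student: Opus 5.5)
We argue by contradiction. Suppose $\alpha(S)=0$ with $n\in\{4,7\}$, and let $M=S_G(X)\subset v^\perp$ be the Mukai-embedded coinvariant lattice, with $R=M^\perp_{\Mukai}$ and $a=\divi_R(v)$. By Proposition~\ref{prop:geometry}, $M$ contains no numerical wall. Proposition~\ref{prop:dimension} then gives $a=\cont(\Leech^G)=3$. Lemma~\ref{lem:ternary-reduction} shows that $T=\Leech^G$ lies in row $114$, so $T\simeq D_4(3)$. Let $c\in A_S$ be the gluing class. By Lemma~\ref{lem:gluing-class}, it has order $3$ and $q_S(c)\equiv\frac{2n-2}{9}\pmod{2\Z}$. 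Proposition~\ref{prop:depth} with $k=1$ gives $\mu_S(c)\geq4+\frac{2n-2}{9}$.

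For $n=4$ we have $q_S(c)=\frac23$ and the lower bound $\mu_S(c)\geq\frac{14}{3}$. Lemma~\ref{lem:d4-projection} gives $\mu_S(c)\leq\frac83$, which is a contradiction.

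For $n=7$ we have $q_S(c)=\frac43$ and the lower bound $\frac{16}{3}$. Lemma~\ref{lem:order-three-bound} gives $\mu_S(c)<6$, so together with the congruence $\mu_S(c)\equiv\frac43\pmod{2\Z}$ we get $\mu_S(c)=\frac{16}{3}$ exactly. I would break this with a sharper upper bound, proved in the same way as Lemma~\ref{lem:d4-projection}.

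\begin{itemize}
\item Under gluing, $c$ corresponds to a class $t+T\in A_T$ with $q_T\equiv-\frac43\equiv\frac23$. In the model $u\mapsto u/3$, these are the classes of $u\in D_4$ with $\sum u_j^2\equiv2\pmod 6$, for instance the roots $\pm e_i\pm e_j$ of $D_4$.
\item I would check that all such order-$3$ classes form one $O(D_4)$-orbit. Signed permutations act on the classes with two nonzero coordinates mod $3$, and the $D_4$ reflections should connect them to any other type. The normalizer index $1$ of row $114$ then reduces the problem to a single class.
\item From the coordinate table of the row-$114$ basis, I would exhibit a minimal Leech vector $\lambda$ with $\lambda^2=4$ and $\pr_T(\lambda)=t$ of norm $\frac23$, for example $((\lambda,\beta_i))_i=(0,2,0,0)$ or a similar pattern with $\frac{4E_i\pm4E_j}{\sqrt8}$. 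This gives $\pr_S(\lambda)^2=\frac{10}{3}<\frac{16}{3}$, which is the desired contradiction.
\end{itemize}

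If no suitable norm-$4$ vector exists, a norm-$6$ vector would only give $\frac{16}{3}$, so it would not suffice. In that case I would fall back on the rank-four Mukai complement. Here $R$ is positive definite of rank $4$ with $\cont(R)=3$, so $R=R'(3)$ with $R'$ integral, $\det R'=324/81=4$, and $q_R\simeq q_S\simeq-q_T$. Classifying such $R'$ (I expect $R'\simeq D_4$ up to the genus constraint), one writes $v=3v'$ with $v'\in R'^\vee$. Since $\divi_R(v)=3$ and $v^2=12$, we get $v'^2=\frac43$. I would then look for an incompatibility: either $v$ is not primitive in $\Mukai$, or $R^\vee\cap\Q v$ contains $v/6$, which forces a further gluing vector $\rho$ with $(\rho,v)\leq\frac{v^2}{2}$ and $\rho^2\geq-2$ and hence a wall.

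The main obstacle is the explicit $n=7$ upper bound, namely finding the right short Leech vector and verifying the orbit statement for value-$\frac23$ classes in $A_{D_4(3)}$.
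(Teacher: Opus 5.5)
\textbf{Case $n=4$.} Your argument for $n=4$ is exactly the paper's: Proposition~\ref{prop:depth} against Lemma~\ref{lem:d4-projection}.

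\textbf{Case $n=7$: the main route is unverified.} The case $n=7$ has a genuine gap. You correctly observe that Proposition~\ref{prop:depth}, Lemma~\ref{lem:order-three-bound} and the congruence meet exactly at $\mu_S(c)=\frac{16}{3}$. So everything rests on your sharper bound $\mu_S(c)\le\frac{10}{3}$ for the value-$\frac43$ classes, and you have not established it. It needs the full coordinates of the row-$114$ embedding, which you do not have. Nothing proved in the paper guarantees that some norm-$4$ or norm-$6$ Leech vector projects into a root class of $A_T$ with a large enough $T$-component.

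Your bookkeeping is also off. The pattern $((\lambda,\beta_i))_i=(0,2,0,0)$ gives $t=\frac23(\beta_1+2\beta_2+\beta_3+\beta_4)$, two thirds of the highest root, of norm $\frac83$, not $\frac23$. If such a $\lambda$ existed it would give $\pr_S(\lambda)^2=\frac43$, but its existence is exactly the missing computation. Likewise, a norm-$6$ vector with $T$-projection $-\frac23r$ would give $\frac{10}{3}$, not $\frac{16}{3}$. The orbit claim is correct and simpler than you suggest: the value-$\frac23$ classes are precisely the $24$ root classes, and $W(D_4)$ is already transitive on them.

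\textbf{Case $n=7$: the fallback.} Your fallback is in fact the paper's route, but it stops exactly where the work is. Two steps are needed.
\begin{enumerate}
\item $R$ is even with $\cont(R)=3$, so every norm in $R$ lies in $6\Z$. Hence $R(\frac13)$ is an \emph{even} positive-definite lattice of rank $4$ and determinant $4$. The Hermite bound gives $\min R(\frac13)\le\sqrt2\cdot4^{1/4}=2$, with equality only for lattices similar to $D_4$. Therefore $R\simeq D_4(3)$, and no genus classification is needed.
\item $v^2=12$ makes $v$ a norm-$4$ vector of $D_4$, i.e.\ $\pm2e_i$ or $(\pm1,\pm1,\pm1,\pm1)$. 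Each of these has divisibility exactly $2$ in $D_4$, so $\divi_R(v)=6$. This contradicts $a=3$ from Proposition~\ref{prop:dimension} directly; no new wall is needed.
\end{enumerate}

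Your normalization ``$v=3v'$ with $v'\in R'^\vee$'' is also wrong as written. From $\divi_R(v)=3$ one gets $\divi_{R'}(v)=1$. Thus $v/3$ lies in $R^\vee=\frac13R'^\vee$, not in $R'^\vee$, and $\frac43$ is its norm measured in $R$.
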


\begin{proof}
Suppose $\alpha(S)=0$ and put $T=\Leech^G$. Let $v,R,a,c$ be as in Lemma~\ref{lem:gluing-class}. Proposition~\ref{prop:dimension} gives $a=\cont(T)=3$, so Lemma~\ref{lem:ternary-reduction} identifies $T$ with the row-$114$ embedding of $D_4(3)$.

If $n=4$, then Lemma~\ref{lem:gluing-class} gives a class $c$ of order $3$ with $q_S(c)=\frac23$. Proposition~\ref{prop:depth} and Lemma~\ref{lem:d4-projection} give $\frac{14}{3}\leq\mu_S(c)\leq\frac83$, a contradiction.

For $n=7$, one has $\rk S=20$, so $R$ is positive definite of rank $4$. Lemma~\ref{lem:gluing-class} and unimodular gluing give $\cont(R)=3$ and
\[
 |A_R|=|A_T|=|A_{D_4(3)}|=3^4|A_{D_4}|=4\cdot3^4.
\]
Since $R$ is even, the norms of its vectors are divisible by $6$; hence $R(\frac13)$ is even integral of determinant $4$.

For any positive-definite lattice $L$ of rank $4$, its minimum nonzero norm satisfies $\min L\leq\sqrt2\,(\det L)^{1/4}$ by the four-dimensional Hermite bound \cite[Chapter~1, Tables~1.1--1.2 and Eq.~(47)]{ConwaySloane}. Equality holds precisely for lattices similar to $D_4$ \cite[Chapter~1, \S~1.5, p.~13]{ConwaySloane}. Applying this bound gives $\min R(\frac13)\leq\sqrt2\,4^{1/4}=2$. Since $R(\frac13)$ is even, its minimum is at least $2$, so equality holds. As $\det D_4=4$ as well, the scale is fixed and $R(\frac13)\simeq D_4$. Therefore $R\simeq D_4(3)$.

Since $v^2=12$, the vector $v$ has norm $4$ in the unscaled $D_4$. A straightforward calculation of norm-$4$ vectors in $D_4$ shows that $v$ has divisibility exactly $2$ in $D_4$ (see the argument before Lemma~\ref{lem:d4-projection}). Hence $a=\divi_R(v)=6$, contradicting $a=3$.
\end{proof}

\subsection{The case \texorpdfstring{$n=5$}{n=5}}\label{sec:parity}

\begin{proposition}\label{prop:isotropic-two}
Let $G\subset O(\Leech)$ be a finite subgroup and let $S=S_G(\Leech)$. If $d\in A_S$ has order $2$ and $q_S(d)=0$, then $\mu_S(d)\in\{2,4\}$.
\end{proposition}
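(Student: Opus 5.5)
The plan is to pin down $\mu_S(d)$ by parity, then rule out the two remaining values $6$ and $8$ using the mod-two class of $2x$ in $\Leech/2\Leech$.

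\emph{Parity and the range.} Fix $x\in S^\vee$ with $[x]=d$ and $x^2=\mu_S(d)$. Since $q_S(d)=0$, we have $x^2\in2\Z$. Since $d\neq0$, $x\neq0$, so $x^2\geq2$. Lemma~\ref{lem:leech-bounds} with $\rk S\leq24$ gives $x^2\leq8$. It therefore suffices to exclude $x^2=6$ and $x^2=8$.

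\emph{The mod-two class of $2x$.} As in Lemmas~\ref{lem:half-parity} and~\ref{lem:two-exception}, look at $2x\in S$, for which $Q(2x)=2x^2\equiv0$. Since $2gx-2x\in2S$, the group $G$ preserves the class $2x+2\Leech$, and hence its set of shortest vectors. We go through the cases of Lemma~\ref{lem:modtwo}.
\begin{itemize}
\item If $2x\in2\Leech$, then $x\in\Leech\cap S_\R=S$ by primitivity, so $d=0$, which is impossible.
\item If the class contains a unique pair $\pm w$ of norm $4$, then $G$ preserves $\{\pm w\}$.
\begin{itemize}
\item If some $g\in G$ sends $w$ to $-w$, averaging gives $w\in S$, and primitivity gives $2x-w\in2S$. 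Then $w/2$ represents $d$ with norm $1$, which contradicts $q_S(d)=0$.
\item Otherwise $w\in\Leech^G$, so $x\perp w$. The Leech vector $x-\frac w2$ then has norm $x^2+1$, which is odd and contradicts evenness.
\end{itemize}
\end{itemize}
So the shortest vectors of $2x+2\Leech$ form a norm-$8$ frame $\pm\lambda_1,\ldots,\pm\lambda_{24}$, and $G$ permutes its $24$ pairs.

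\emph{Frame bookkeeping.} Write $\lambda_i=s_i+t_i$ with $s_i=\pr_S(\lambda_i)$ and $t_i=\pr_{\Leech^G}(\lambda_i)$, and set $b_i=(\lambda_i,x)=(s_i,x)$.
\begin{itemize}
\item Since $\lambda_i\equiv2x\pmod{2\Leech}$, each $x-\lambda_i/2$ lies in $\Leech$. Evenness gives $b_i\in\Z$ and $b_i\equiv x^2/2\pmod2$.
\item The frame identity gives $\sum_ib_i^2=8x^2$, which is $48$ or $64$.
\item Consider the projections $x-s_i/2\in S^\vee$. By Corollary~\ref{cor:projection} and the Voronoi inequality $2(x,z)\leq z^2$ for $z\in S$, I expect to bound $|b_i|$ from above. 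Where the Voronoi inequality needs a vector of $S$, use $z=(\lambda_i-\lambda_j)/2\in S$ whenever $t_i=t_j$, exactly as in the end of Lemma~\ref{lem:two-exception}.
\end{itemize}

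\emph{Exploiting the $G$-action.} This is where the orbit structure enters.
\begin{itemize}
\item If $G$ maps some $\lambda_i$ to $-\lambda_i$, averaging gives $\lambda_i\in S$. Then $x-\lambda_i/2\in S$ by primitivity, and $(x-\lambda_i)^2=x^2-2b_i+8$. Comparing with $x^2$ via minimality should force $b_i\geq4$.
\item If pairs are permuted in orbits, average $x$-pairings over the orbit, using that $\sum_{g}g\lambda_i\in\Leech^G$ is orthogonal to $x$. The $b_i$ then come in orbits whose signed sum vanishes.
\end{itemize}
The goal is to show that the constraints $|b_i|\leq B$ (small), $b_i\equiv x^2/2\pmod2$ and $\sum b_i^2=8x^2$ cannot all hold when $x^2\in\{6,8\}$. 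For $x^2=6$ all $b_i$ are odd; for $x^2=8$ they are even. In each case I would look for two indices with equal $T$-component producing $z\in S$ with $(x,z)>z^2/2$, hence a shorter representative of $d$, contradicting minimality.

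\emph{Main obstacle.} I expect the hardest step to be the last one: extracting the bound on $|b_i|$ and the coincidence $t_i=t_j$ from the frame without the norm-$6$ vector $w\in\Leech^G$ that anchored the Cauchy--Schwarz argument in Lemma~\ref{lem:two-exception}. I would first try to use $\sum_i t_i^2=8\rk T$ together with $t_i^2=8-s_i^2\leq8-x^2$, the latter coming from Corollary~\ref{cor:projection} applied to suitable lifts. For $x^2=8$ this forces $t_i=0$, so every $\lambda_i\in S$ and $\rk S=24$, contradicting $\Leech^G\neq0$ whenever relevant. For $x^2=6$ it gives $t_i^2\leq2$, and I would then argue via pigeonhole on the finitely many short $t_i$ modulo $\Leech^G$.
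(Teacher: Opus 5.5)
Your reduction to $x^2\in\{6,8\}$ and your exclusion of $2x\in2\Leech$ and of a norm-$4$ pair agree with the paper. The frame part, however, has a genuine gap, and two of its ingredients are wrong.

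\emph{The parity is wrong.} From $x-\lambda_i/2\in\Leech$ one gets $(x-\lambda_i/2)^2=x^2-b_i+2\in2\Z$. Hence $b_i\equiv x^2\equiv0\pmod2$ in both cases, not $b_i\equiv x^2/2$. With the correct parity, take $x^2=6$ and twelve values $b_i=\pm2$, arranged with zero signed orbit sums. This pattern satisfies every constraint you list, including $\sum_ib_i^2=48$ and any bound $|b_i|\le B$ with $B\ge2$. For it, $(x,(\lambda_i-\lambda_j)/2)=(b_i-b_j)/2\le2=z^2/2$, so no shorter representative can be built from two frame vectors.

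\emph{The bound $t_i^2\le8-x^2$ is unjustified.} Corollary~\ref{cor:projection} concerns classes $u+2\Leech$ with $\pr_S(u)=x$, whereas $\pr_S(2x)=2x$. Once signs are chosen so that $G$ permutes the $\lambda_i$, $t_i$ is the orbit average and $t_i^2=8/|O_i|$. So your bound amounts to asserting that every orbit has size at least $4$, and you have no way to prove this.

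Two smaller points. If some $g$ flips $\lambda_i$, then $\lambda_i/2\in x+S$ has norm $2$, which settles that case at once; comparing with $x\pm\lambda_i$ gives $|b_i|\le4$, not $b_i\ge4$. Also, $x^2=8$ is excluded directly: $d\neq0$ forces $\Leech^G\neq0$, so $\rk S\le23$, and Lemma~\ref{lem:leech-bounds} gives $x^2\le23/3$.

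\emph{The missing idea.} For $x^2=6$ you need a global mechanism that produces a representative differing from $x$ in many frame coordinates at once. The paper's mechanism is coding-theoretic:
\begin{enumerate}
\item Put $r_i=\lambda_i/2$. Every $y\in x+S$ equals $\frac12\sum_im_i(y)r_i$ with $m_i(y)=(y,r_i)\in\Z$, and the parity vectors form an affine subspace $\mathcal A\subset\F_2^{24}$.
\item One has $r_i-r_j\in S$ for $i,j$ in one orbit, and the $m_i$ have zero orbit sums. Hence every $b\in\mathcal A$ is realized by the representative $\frac12\sum_i\varepsilon_ir_i$ with $\varepsilon_i\in\{0,\pm1\}$, of norm $\frac12\mathrm{wt}(b)$. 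So the weights in $\mathcal A$ are nonzero multiples of $4$.
\item Consider $z=\frac12\pr_S(r_J)$ for $|J|\le2$. Triviality of the action on $A_S$, rootlessness of $S$, and $S_\R^G=0$ show that no single coordinate and no sum of two coordinates is constantly $1$ on $\mathcal A$.
\item Averaging the weight over $\mathcal A_0=\{b\in\mathcal A:b_1=0\}$ then gives an element of weight at most $11$, hence $4$ or $8$. This is a representative of norm $2$ or $4$.
\end{enumerate}
Without this idea, or an equivalent one, your plan does not exclude $x^2=6$.
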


\begin{proof}
Choose a representative $x\in S^\vee$ of $d$, so $2x\in S$ and $x^2\in2\Z$. Unimodular gluing makes the action on $A_S$ trivial. Since $\Leech\cap S_\R=S$, the class $2x+2\Leech$ is nonzero and $G$-invariant. Its quadratic value is $2x^2\equiv0\pmod2$, so Lemma~\ref{lem:modtwo} gives a norm-$4$ pair or a norm-$8$ frame.

For a norm-$4$ pair $\{\pm w\}$, if some $g\in G$ sends $w$ to $-w$, averaging gives $w\in S$. Then $w/2-x\in\Leech\cap S_\R=S$, contradicting $q_S(d)=0$ since $(w/2)^2=1$. Otherwise $w$ is $G$-fixed, so $x\perp w$ and $x-w/2\in\Leech$ has the odd norm $x^2+1$, again impossible. 

Thus we have a frame $\{\pm w_i\}_{i=1}^{24}$. If $gw_i=-w_i$ for some $g,i$, the same averaging argument gives $w_i/2\in x+S$, of norm $2$. Otherwise each line stabilizer fixes its chosen vector, so we may choose signs along each orbit such that $G$ permutes the $w_i$.

Put $r_i=w_i/2$, so $(r_i,r_j)=2\delta_{ij}$ and $r_i-x\in\Leech$. Evenness of $(r_i-x)^2$ gives $(x,r_i)\in\Z$, and $(s,r_i)\in\Z$ for $s\in S$. Thus every $y\in x+S$ has the form $y=\frac12\sum_i m_i(y)r_i$, where $m_i(y)=(y,r_i)\in\Z$. We record their coordinate parities in the nonempty affine subspace
\[
 \mathcal A=\{(m_i(y)\bmod2)_{i=1}^{24}:y\in x+S\}\subset\F_2^{24}.
\]
We will find an element of $\mathcal A$ of weight at most $8$ and realize it by a representative of $d$ of norm at most $4$.

For each coordinate orbit $O$, the sum $\sum_{i\in O}r_i$ is $G$-fixed, so $\sum_{i\in O}m_i(y)=0$. In particular, every $b\in\mathcal A$ has an even number of nonzero coordinates in each orbit. Moreover, if $i,j\in O$, then $r_i-r_j\in S$: it belongs to $\Leech$, and its $G$-average is zero because $r_i$ and $r_j$ lie in the same orbit.

For $b\in\mathcal A$, choose a lift $y\in x+S$. In each orbit assign $+1$ to half the nonzero coordinates of $b$ and $-1$ to the other half, and assign $0$ elsewhere; write these integers as $\varepsilon_i$. Then
\[
 y_b=\frac12\sum_i\varepsilon_i r_i\in x+S.
\]
Indeed, $y-y_b=\sum_i\frac{m_i(y)-\varepsilon_i}{2}r_i$ has integer coefficients summing to zero on each orbit, hence is an integer combination of the differences $r_i-r_j\in S$. Thus $y_b$ still represents $d$, and its norm is half the number of nonzero coordinates of $b$. Consequently $0\notin\mathcal A$, and every weight in $\mathcal A$ is divisible by $4$, since all norms in $x+S$ are even.

Suppose $\sum_{i\in J}b_i=1$ for all $b\in\mathcal A$, for some $J\subset\{1,\ldots,24\}$ with $1\leq|J|\leq2$. Put $r_J=\sum_{i\in J}r_i$. Then $(x,r_J)$ is odd and $(s,r_J)$ is even for all $s\in S$, so $z=\frac12\pr_S(r_J)\in S^\vee$. Triviality of the action on $A_S$ gives $gz-z\in S$. Since $gr_J$ and $r_J$ have the same projection onto the fixed subspace, we have
\[
 gz-z=\frac12(gr_J-r_J),\qquad
 (gz-z)^2\leq|J|\leq2.
\]
The bound follows because cancelling common terms in $gr_J-r_J$ leaves at most $2|J|$ orthogonal terms of norm $2$. Since every nonzero vector of $S$ has norm at least $4$, we have $gz=z$ for every $g$. As $S_\R^G=0$, we get $z=0$, contradicting the oddness of $(x,r_J)=2(x,z)$.

Therefore $\mathcal A_0=\{b\in\mathcal A:b_1=0\}$ is nonempty. If a coordinate $b_j$ were constantly $1$ on $\mathcal A_0$, affine linearity would give either $b_j=1$ or $b_1+b_j=1$ throughout $\mathcal A$, both excluded above. Each coordinate on $\mathcal A_0$ is thus either constantly $0$ or takes $0$ and $1$ equally often. The first coordinate is constantly $0$, so the average weight is at most $23/2<12$.

Some $b\in\mathcal A_0$ therefore has weight $4$ or $8$, giving $y_b\in x+S$ of norm $2$ or $4$. Since $d\ne0$ and $q_S(d)=0$, this proves $\mu_S(d)\in\{2,4\}$.
\end{proof}

Combining Propositions~\ref{prop:depth} and~\ref{prop:isotropic-two} excludes $n=5$:

\begin{proposition}\label{prop:five}
Let $G$ be a finite group acting by symplectic automorphisms on a manifold $X$ of $K3^{[5]}$ type, and let $S$ be its associated Leech coinvariant lattice. Then $\alpha(S)>0$.
\end{proposition}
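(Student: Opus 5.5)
Suppose for contradiction that $\alpha(S)=0$. The plan is to produce an order-two class of $A_S$ with quadratic value $0$. Proposition~\ref{prop:depth} then forces its minimum norm to be at least $6$, while Proposition~\ref{prop:isotropic-two} caps it at $4$.

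First, put $M=S_G(X)$ and fix the Mukai vector $v$ with $v^2=2n-2=8$. Let $R=M^\perp_{\Mukai}$ and $a=\divi_R(v)$. By Proposition~\ref{prop:geometry}, $M$ contains no numerical wall, and $\rk S\leq20$. Lemma~\ref{lem:gluing-class} gives $\cont(\Leech^G)>1$, $\cont(\Leech^G)\mid a$ and $a\mid8$, so $a\in\{2,4,8\}$. If $\cont(\Leech^G)$ were odd, it would equal $3$ by Lemma~\ref{lem:odd-content}, which is incompatible with $\cont(\Leech^G)\mid a\mid 8$. So everything is $2$-power, and $a$ is even.

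Next, take $d=\frac a2c$, a class of order two. Lemma~\ref{lem:gluing-class}\eqref{eq:glueclass} gives $q_S(c)\equiv\frac{8}{a^2}\pmod{2\Z}$, hence
\[
 q_S(d)\equiv\frac{a^2}{4}\cdot\frac{8}{a^2}=2\equiv0\pmod{2\Z}.
\]
This congruence needs care, since $q_S(kc)=k^2q_S(c)$ holds modulo $2\Z$ only when one uses the bilinear form properly. To be safe, I will argue directly: the representative $\frac{a}{2}\cdot\frac{v}{a}=\frac v2$ of the corresponding class in $A_R$ has norm $\frac{8}{4}=2\equiv0$. Under the anti-isometry the value of $d$ is therefore $0$.

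Now apply Proposition~\ref{prop:depth} with $k=a/2\leq\lfloor a/2\rfloor$. This gives $\mu_S(d)\geq4+\frac{(a/2)^2\cdot8}{a^2}=6$. On the other hand, Proposition~\ref{prop:isotropic-two} applied to $d$, which has order two and $q_S(d)=0$, gives $\mu_S(d)\leq4$. This is a contradiction, so $\alpha(S)>0$.

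The main point to verify is the quadratic value of $d$, together with the fact that $d$ is nonzero of order exactly two. Both follow from $c$ having order $a$ and from the gluing isometry, so I expect no real obstacle. Proposition~\ref{prop:dimension} already guarantees that $a$ is even in the $n=5$ case, so I could also quote its proof instead of redoing the parity argument.
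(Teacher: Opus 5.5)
Your proposal is correct and matches the paper's proof: $1<a\mid 8$ forces $a$ even, $d=\frac a2 c$ is an isotropic class of order two, and Propositions~\ref{prop:depth} (with $k=a/2$) and~\ref{prop:isotropic-two} give $6\leq\mu_S(d)\leq4$. The detour through Lemma~\ref{lem:odd-content} is unnecessary, since evenness of $a$ is immediate, and the scaling $q_S(kc)\equiv k^2q_S(c)\pmod{2\Z}$ is valid as stated; your direct check via $v/2$ is also fine.
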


\begin{proof}
Suppose $\alpha(S)=0$. Let $v,R,a,c$ be as in Lemma~\ref{lem:gluing-class}. Then $a>1$ and $a\mid8$, so $a$ is even. The class $d=\frac{a}{2}c$ has order two and $q_S(d)=0$ by Lemma~\ref{lem:gluing-class}\eqref{eq:glueclass}. Proposition~\ref{prop:depth}, applied with $k=a/2$, and Proposition~\ref{prop:isotropic-two} give $6\leq\mu_S(d)\leq4$, a contradiction.
\end{proof}

We now come to the proof of the main theorem.

\begin{proof}[Proof of Theorem~\ref{thm:main}]
As noted in \S\ref{sec:coinvariant-wall}, $\rk S\leq20$. Proposition~\ref{prop:geometry} shows that the embedding associated with $X$ satisfies the hypothesis of Proposition~\ref{prop:dimension}. If $\alpha(S)=0$, Proposition~\ref{prop:dimension} leaves $n\in\{2,3,4,5,7\}$. These cases are excluded by Propositions~\ref{prop:low}, \ref{prop:ternary}, and \ref{prop:five}.
\end{proof}

Combining Theorem~\ref{thm:main} and Lemma~\ref{lem:leech-embedding} with the realization results proved independently by Huybrechts \cite[Theorem~0.2]{HuybrechtsConway} and Mongardi \cite[Theorem~29 and Lemma~30]{MongardiClassification} gives the following existence criterion:

\begin{corollary}\label{cor:criterion}
Let $G$ be a finite group acting faithfully by isometries on an even lattice $S$. There exist an integer $n\geq2$ and a manifold $X$ of $K3^{[n]}$ type with a faithful symplectic $G$-action such that $S\simeq S_G(X)(-1)$ as $G$-lattices if and only if $(S,G)$ is a Leech pair as defined in \S\ref{sec:leech}, $\rk S\leq20$, and $\rk S+\ell(A_S)<24$.
\end{corollary}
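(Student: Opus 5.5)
The corollary is a two-way statement, and I will prove the two directions separately, using results already established or cited.

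\emph{Necessity.} Suppose $X$ is of $K3^{[n]}$ type with $n\geq2$, carries a faithful symplectic $G$-action, and $S\simeq S_G(X)(-1)$ as $G$-lattices. Put $M=S_G(X)$. By \S\ref{sec:coinvariant-wall}, $M$ has the following properties:
\begin{enumerate}
\item it is negative definite of rank at most $20$;
\item it is rootless, by Proposition~\ref{prop:geometry};
\item $M^G=0$;
\item $G$ acts trivially on $A_M$, by \cite[Lemma~24(4)]{MongardiClassification}.
\end{enumerate}
Faithfulness of $G$ on $M$ comes from faithfulness on $X$. The action on $H^2$ is faithful for $K3^{[n]}$ type, and $G$ acts trivially on $H^2(X,\Z)^G$, so $G$ acts faithfully on its orthogonal complement $M$. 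Consequently $(S,G)=(M(-1),G)$ is a Leech pair with $\rk S\leq20$. Lemma~\ref{lem:leech-embedding} identifies $S$ $G$-equivariantly with the Leech coinvariant lattice $S_G(\Leech)$. Theorem~\ref{thm:main} then gives $\rk S+\ell(A_S)<24$.

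\emph{Sufficiency.} Let $(S,G)$ be a Leech pair with $\rk S\leq20$ and $\rk S+\ell(A_S)<24$. Lemma~\ref{lem:leech-embedding} applies, since $\rk S+\ell(A_S)\leq24$. It gives a primitive embedding $S\hookrightarrow\Leech$ to which the $G$-action extends by the identity on $S^\perp$.

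I then check that $S$ is the full coinvariant lattice $S_G(\Leech)$. The inclusion $\Leech^G\supseteq S^\perp$ is immediate. Conversely, since $S^G=0$, any $G$-fixed vector has zero projection to $S_\R$, so it lies in $S^\perp$. Hence $\Leech^G=S^\perp$, and primitivity of $S$ gives $S=S_G(\Leech)$.

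The image of $G$ in $\Co_0$ is isomorphic to $G$, by faithfulness on $S$. We are therefore in the setting of Huybrechts \cite[Theorem~0.2]{HuybrechtsConway} and Mongardi \cite[Theorem~29 and Lemma~30]{MongardiClassification}. Either result produces $n\geq2$ and $X$ of $K3^{[n]}$ type with a faithful symplectic $G$-action such that $S_G(X)\simeq S(-1)$ as $G$-lattices.

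The only point requiring care is bookkeeping, not mathematics. One must make sure the $G$-lattice isomorphism type is preserved when $G$ is viewed abstractly and as a subgroup of $O(\Leech)$. One must also make sure the rank bound $\rk S\leq20$ is the hypothesis used in the cited realization theorems. Neither step should be a genuine obstacle, since the main difficulty is absorbed in Theorem~\ref{thm:main}.
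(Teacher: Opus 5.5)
Your proof is correct and is essentially the paper's argument. The paper gets the corollary the same way: necessity from Theorem~\ref{thm:main}, Lemma~\ref{lem:leech-embedding} and the facts collected in \S\ref{sec:coinvariant-wall}; sufficiency from the Huybrechts--Mongardi realization theorems, where your check that $S^G=0$ forces $\Leech^G=S^\perp$ and hence $S=S_G(\Leech)$ is the needed bookkeeping. The one thing left implicit is that using Lemma~\ref{lem:leech-embedding} for necessity requires $\rk S+\ell(A_S)\leq24$, which comes from the primitive embedding of $M$ into $\Mukai$ recorded in \S\ref{sec:coinvariant-wall}; in any case Theorem~\ref{thm:main} applies directly to the associated Leech coinvariant lattice.
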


\Addresses
\end{document}